\documentclass[11pt]{article}
\usepackage{amsmath,amsthm,amsfonts,amssymb,amscd, amsxtra,mathtools,mathrsfs,commath,url,easyReview}
\usepackage{cite,url}
\usepackage{graphicx}
\usepackage[title]{appendix}

\usepackage{multirow}
\usepackage{multicol}   
%%%%%%%%%%%%%%%%%%%%%%%%%%%%%%%%%%%%%%%%%%%%%%%%%%%%%%%
%\usepackage[margin=1.0in]{geometry}
\usepackage[margin=2.7 cm,nohead]{geometry}
\usepackage{bbm}
%%%%%%%%%%%%%%%%%%%%%%%%%%%%%%%%%%%%%%%%%%%%%%%%%%%%%%%
\usepackage{xcolor}
\usepackage{float}
\usepackage{threeparttablex,color,soul,colortbl,hhline,rotating,booktabs,longtable,multirow,makecell}
\definecolor{lightgray}{gray}{0.95}
\newcommand{\myscaletable}{0.6}
\newcommand{\lightgray}{0.95}
%%%%%%%%%%%%%%%%%%%%%%%%%%%%%%%%%%%%%%%%%%%%%%%%%%%%%%%%%%%%
\usepackage{amssymb,amsfonts,amstext,amsmath}
\usepackage{graphicx,geometry}
\usepackage{listings}
\usepackage{algorithmic}

\usepackage{hyperref}
%\usepackage{refcheck}

%%%%%%%%%%%%%%%%%%%%%%%%%%%%%%%%%%%%%%%%%%%%%%%%%%%%%%%%%%%%
\newtheorem{theorem}{Theorem}[section]
\newtheorem{lemma}[theorem]{Lemma}

\newtheorem{proposition}[theorem]{Proposition}

%\newtheorem{algorithm}{Algorithm}
%%%%%%%%%%%%%%%%%%%%%%%%%%%%%%%%%%%%%%%%%%%%%%%%%%%%%%%%%%%%%
%%%%%%%%%%%%%%%%%%%%%%%%%%%%%%%%%%%%%%%%%%%%%%%%%%%%%%%%%%%%

\newtheorem{assumption}{Assumption}[section]
\newtheorem{algorithm}{Algorithm}

%\numberwithin{algorithm}{section}
\numberwithin{assumption}{section}

\newcommand{\argmin}{{\rm arg}\!\min}

\newcommand{\N}{\mathbb{N}}
\newcommand{\R}{\mathbb{R}}

\newcommand{\K}{\mathbb{K}}

\newcommand{\Ls}{{\cal{L}}}

\newcommand{\D}{\mathcal{D}}
\newcommand{\dsd}{d_{SD}}
\newcommand{\xs}{x^{\ast}}
\newcommand{\lsd}{\lambda^{SD}}
\newcommand{\HF}{\nabla^2 F}
\newcommand{\ds}{\displaystyle}
\newcommand{\subsetinf}{\displaystyle\mathop{\subset}_{\infty}}

\newcommand{\myscale}{0.45}

\newcommand{\ty}{\tilde{y}_j^k}
\newcommand{\ts}{\tilde{s}_j^k}
\newcommand{\tq}{\tilde{q}_j^k}
\newcommand{\ta}{\tilde{a}_j^k}
\newcommand{\tB}{\tilde{B}_j^k}
\newcommand{\tb}{\tilde{b}_j^k}

\newcommand{\tg}{\tilde{\gamma}_j^k}

\newcommand{\tr}{{\rm trace}}

%\mathfrak{R}

%\newcommand{\proofofref}{}
%\newproof{zproofof}{Proof of \proofofref}
%\newenvironment{proofof}[1]
% {\renewcommand{\proofofref}{#1}\zproofof}
% {\endzproofof}

\newenvironment{proofof}[1]
{\vspace{8pt} \noindent{\bf Proof of #1.}}
{\qed\vspace{8pt}}

 %
 
%\newproof{proof1}{Proof of Proposition~\ref{prop:zout}}
%\newproof{pota}{Proof of Theorem \ref{th:LS}}
%\newproof{potb}{Proof of Theorem \ref{th:ALS}}

\newcommand{\bi}{\begin{itemize}}
\newcommand{\ei}{\end{itemize}}
\newcommand{\ba}{\begin{array}}
\newcommand{\ea}{\end{array}}

\usepackage{amssymb}

\begin{document}

\title{Global convergence of a BFGS-type algorithm for nonconvex multiobjective optimization problems}
\author{
L. F. Prudente\thanks{Instituto de Matem\'atica e Estat\'istica, Universidade Federal de Goi\'as,  CEP 74001-970, Goi\^ania, GO, Brazil, E-mails: {\tt  lfprudente@ufg.br},  {\tt  danilo\underline{ }souza@discente.ufg.br}. This work was funded by CAPES and CNPq (Grants 309628/2020-2, 405349/2021-1).}
\and
D. R. Souza\footnotemark[1]
}
\maketitle

\maketitle

\maketitle
\vspace{.5cm}

\noindent

\noindent
{\bf Abstract:} 
We propose a modified BFGS algorithm for multiobjective optimization problems with global convergence, even in the absence of convexity assumptions on the objective functions. Furthermore, we establish a local superlinear rate of convergence of the method under usual conditions. Our approach employs Wolfe step sizes and ensures that the Hessian approximations are updated and corrected at each iteration to address the lack of convexity assumption. Numerical results shows that the introduced modifications preserve the practical efficiency of the BFGS method.

\vspace{8pt}
\noindent
{\bf Keywords:} Multiobjective optimization, Pareto optimality, quasi-Newton methods, BFGS, Wolfe line search, global convergence, rate of convergence.
%%%%%%%%%%%%%%%

\vspace{8pt}
\noindent {\bf AMS subject classifications:} 49M15, 65K05, 90C29, 90C30, 90C53

\section{Introduction} 

Multiobjective optimization problems involve the simultaneous minimization of multiple objectives that may be conflicting.
The goal is to find a set of solutions that offer different trade-offs between these objectives, helping decision makers in identifying the most satisfactory solution. {\it Pareto optimality} is a fundamental concept used to characterize such solutions. A solution is said to be {\it Pareto optimal} if none of the objectives can be improved without deterioration to at least one of the other objectives. 

Over the last two decades, significant research has focused on extending iterative methods originally developed for single-criterion  optimization to the domain of multiobjective optimization, providing an alternative to scalarization methods \cite{Eichfelder2008,miettinen1999nonlinear}. This line of research was initiated by Fliege and Svaiter in 2000 with the extension of the steepest descent method~\cite{benar&fliege} (see also \cite{mauricio&benar}). Since then, several methods have been studied, including Newton~\cite{chuong2013newton,mauriciobenarfliege,mauricionewtonvetorial,wang2019extended,gonccalves2021globally},  quasi-Newton~\cite{ansary,math8040616,superlinearly,Morovati2017,POVALEJ2014765,QU2011397,QU2014503,Leandro&Danilo,Lapucci2023},  conjugate gradient~\cite{Goncalves2019,cgvo,GONCALVES2022127099}, conditional gradient \cite{Assuncao2021,Chen2023}, projected gradient~\cite{luis-jef-yun,fazzio2019convergence,fukuda2011convergence,fukuda2013inexact,mauricio&iusem}, and proximal  methods~\cite{bonnel2005proximal,ceng2010hybrid,ceng2007approximate,chuong2011generalized,chuong2011hybrid}.

Proposed independently by Broyden~\cite{Broyden70}, Fletcher~\cite{Fletcher70}, Goldfarb~\cite{Goldfarb70}, and Shanno~\cite{Shanno70} in 1970, the BFGS is the most widely used quasi-Newton method for solving unconstrained scalar-valued optimization problems. As a quasi-Newton method, it computes the search direction using a quadratic model of the objective function, where the Hessian is approximated based on first-order information. Powell~\cite{powell1976some} was the first to prove the global convergence of the BFGS method for convex functions, employing a line search that satisfies the Wolfe conditions. Some time later, Byrd and Nocedal~\cite{Tool} introduced additional tools that simplified the global convergence analysis, enabling the inclusion of backtracking strategies. For over three decades, the convergence of the BFGS method for nonconvex optimization remained an open question until Dai~\cite{dai2002convergence}, in the early 2000s, provided a counterexample showing that the method can fail in such cases (see also \cite{dai2013perfect,mascarenhas2004bfgs}).
Another research direction focuses on proposing suitable modifications to the BFGS algorithm that enable achieving global convergence for nonconvex general functions while preserving its desirable properties, such as efficiency and simplicity. Notable works in this area include those by Li and Fukushima~\cite{Li-Fu-MBFGS,Li-Fu-SIAM}.

The BFGS method for multiobjective optimization was studied in \cite{Leandro&Danilo,Lapucci2023,POVALEJ2014765,math8040616,superlinearly,Morovati2017,QU2011397,QU2014503}. However, it is important to note that, except for \cite{Leandro&Danilo,QU2011397}, the algorithms proposed in these papers are specifically designed for convex problems. The assumption of convexity is crucial to ensure that the Hessian approximations remain positive definite over the iterations, guaranteeing the well-definedness of these methods.
In \cite{QU2011397}, Qu {\it et al.} proposed a {\it cautious} BFGS update scheme based on the work \cite{Li-Fu-SIAM}.
This approach updates the Hessian approximations only when a given safeguard criterion is satisfied, resulting in a globally convergent algorithm for nonconvex problems.
In \cite{Leandro&Danilo}, Prudente and Souza proposed a BFGS method with Wolfe line searches which exactly mimics the classical BFGS method for single-criterion optimization. This variant is well defined even for general nonconvex problems, although global convergence cannot be guaranteed in this general case. 
Despite this, it has been shown to be globally convergent for strongly convex problems.

In the present paper, inspired by the work \cite{Li-Fu-MBFGS}, we go a step further than \cite{Leandro&Danilo} and introduce a modified BFGS algorithm for multiobjective optimization which possesses a global convergence property even without convexity assumption on the objective functions. Furthermore, we establish the local superlinear convergence of the method under certain conditions. Our approach employs Wolfe step sizes and ensures that the Hessian approximations are updated and corrected at each iteration to overcome the lack of convexity assumption. Numerical results comparing the proposed algorithm with the methods introduced in \cite{Leandro&Danilo,QU2011397} are discussed. Overall, the modifications made to the BFGS method to ensure global convergence for nonconvex problems do not compromise its practical performance.

The paper is organized as follows: Section~\ref{sec:pre} presents the concepts and preliminary results, Section~\ref{sec:algorithm} introduces the proposed modified BFGS algorithm and discusses its global convergence, Section~\ref{sec:superlinear} focuses on the local convergence analysis with superlinear convergence rate, Section~\ref{sec:numerical} presents the numerical experiments, and Section~\ref{sec:final} concludes the paper with some remarks. Throughout the main text, we have chosen to omit proofs that can be easily derived from existing literature to enhance overall readability. However, these proofs are provided in the Appendix for self-contained completeness.

\vspace{5pt}
\noindent{\bf Notation.} $\mathbb{R}$  and $\mathbb{R}_{++}$ denote the set of real numbers and the set of  positive real numbers, respectively. As usual, $\mathbb{R}^n$ and $\mathbb{R}^{n\times p}$ denote the set of $n$-dimensional real column vectors and the set of ${n\times p}$ real matrices,  respectively.
The identity matrix of size $n$ is denoted by $I_n$.  
$\|\cdot\|$ is the Euclidean norm.
If $u,v\in\R^n$, then $u\preceq v$ (or $\prec$) is to be understood in a componentwise sense, i.e., $u_i\leq v_i$ (or $<$) for all $i=1,\ldots,n$.
 For $B\in\R^{n\times n}$, $B \succ 0$ means that $B$ is positive definite.
In this case, $\langle \cdot,\cdot\rangle_B$ and $\|\cdot\|_B$ denote the $B$-energy inner product and the $B$-energy norm, respectively, i.e., for $u,v\in\R^n$, $\langle u,v\rangle_B:=u^\top B v$ and $\|u\|_B:=\sqrt{\langle u,u\rangle_B}$.
If $K = \{k_1, k_2, \ldots \} \subseteq \N$, with $k_j < k_{j+1}$ for all $j\in \N$, then  we denote $K  \subsetinf \N$.

\section{Preliminaries} \label{sec:pre}

In this paper, we focus on the problem of finding a {\it Pareto optimal} point of a continuously differentiable function $F: \mathbb{R}^n \rightarrow \mathbb{R}^m$. This problem can be denoted as follows:
\begin{equation} \label{vectorproblem}
\min_{x \in \mathbb{R}^n} F(x).
\end{equation}
A point $x^{\ast}\in\mathbb{R}^n$ is {\it Pareto optimal} (or {\it weak Pareto optimal}) of $F$ if there is no other point $x \in \mathbb{R}^n$ such that $F(x) \preceq F(x^{\ast})$ and $F(x) \neq F(x^{\ast})$ (or $F(x) \prec F(x^{\ast})$).
These concepts can also be defined locally. We say that $x^{\ast}\in\mathbb{R}^n$ is a {\it local Pareto optimal} (or  {\it local weak Pareto optimal}) point if there exists a neighborhood $U \subset \mathbb{R}^n$ of $x^{\ast}$ such that $x^{\ast}$ is Pareto optimal (or weak Pareto optimal) for $F$ restricted to $U$. A necessary condition (but not always sufficient) for the local weak Pareto optimality of $x^{\ast}$ is given by:
\begin{equation}\label{critical}
-(\mathbb{R}^m_{++}) \cap \text{Image}(JF(x^{\ast})) = \emptyset,
\end{equation}
where $JF(x^{\ast})$ denotes the Jacobian of $F$ at $x^{\ast}$. A point $x^{\ast}$ that satisfies \eqref{critical} is referred to as a {\it Pareto critical} point.
It should be noted that if $x\in\mathbb{R}^n$ is not Pareto critical, then there exists a direction $d\in\mathbb{R}^n$ such that $\nabla F_j(x)^\top  d < 0$ for all $j = 1, \ldots, m$. This implies that $d$ is a {\it descent direction} for $F$ at $x$, meaning that there exists $\varepsilon > 0$ such that $F(x + \alpha d) \prec F(x)$ for all $\alpha \in (0,\varepsilon]$. 
Let $\D: \mathbb{R}^n \times \mathbb{R}^n \rightarrow \mathbb{R}$ be defined as follows:
\begin{equation*} \label{dderiv}
\D(x,d) \coloneqq \max_{j = 1,\dots,m }\nabla F_j(x)^\top d.
\end{equation*}
The function $\D$ characterizes the descent directions for $F$ at a given point $x$. Specifically, if $\D(x,d) < 0$, then $d$ is a descent direction for $F$ at $x$. Conversely, if $\D(x,d) \geq 0$ for all $d \in \mathbb{R}^n$, then $x$ is a Pareto critical point. 

We define $F: \mathbb{R}^n \rightarrow \mathbb{R}^m$ as convex (or strictly convex) if each component $F_j: \mathbb{R}^n \rightarrow \mathbb{R}$ is convex (or strictly convex) for all $j = 1, \ldots, m$, i.e., for all $x,y \in\R^n$ and $t \in [0,1]$ (or $t \in (0,1)$), 
\begin{equation}\label{eq:convex}
 F((1-t)x+ty)\preceq (1-t)F(x)+tF(y) \quad (\mbox{or} \prec).
 \end{equation}
The following result establishes a relationship between the concepts of criticality, optimality, and convexity.
\begin{lemma}\cite[Theorem 3.1]{mauriciobenarfliege}\label{Theo3.1-Bennar-Fliege} The following statements  hold:
	\begin{itemize}
		\item[(i)]  if $x^{\ast}$ is local weak Pareto optimal, then $x^{\ast}$ is a Pareto critical point for $F$;
		\item[(ii)]  if $F$ is convex and $x^{\ast}$ is Pareto critical for $F$, then $x^{\ast}$ is weak Pareto optimal;
		\item[(iii)]  if $F$ is strictly convex  and $x^{\ast}$ is Pareto critical for $F$, then $x^{\ast}$ is Pareto optimal.
	\end{itemize}
\end{lemma}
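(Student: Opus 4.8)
The statement has three parts, and each is proved by a contrapositive or direct argument built on the characterization of descent directions already recalled in the excerpt.

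For part (i), the plan is to argue by contraposition: suppose $x^{\ast}$ is \emph{not} Pareto critical. Then by the discussion following \eqref{critical}, there is a direction $d \in \mathbb{R}^n$ with $\nabla F_j(x^{\ast})^\top d < 0$ for all $j = 1,\ldots,m$, i.e. $\D(x^{\ast},d) < 0$. By the definition of a descent direction, there exists $\varepsilon > 0$ with $F(x^{\ast} + \alpha d) \prec F(x^{\ast})$ for all $\alpha \in (0,\varepsilon]$. Since $x^{\ast} + \alpha d \to x^{\ast}$ as $\alpha \downarrow 0$, every neighborhood $U$ of $x^{\ast}$ contains such points, so $x^{\ast}$ fails to be local weak Pareto optimal. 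This contradicts the hypothesis, proving (i). The only mild technical point is producing $\varepsilon$, which is exactly the content of the descent-direction fact quoted before the definition of $\D$, so nothing new is needed.

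For parts (ii) and (iii), the plan is again contraposition, now using convexity to turn a global comparison into a first-order (directional) one. Suppose $x^{\ast}$ is not weak Pareto optimal (for (ii)) or not Pareto optimal (for (iii)); then there is $y \in \mathbb{R}^n$ with $F(y) \prec F(x^{\ast})$, respectively $F(y) \preceq F(x^{\ast})$ and $F(y) \neq F(x^{\ast})$. Set $d := y - x^{\ast}$. Using convexity \eqref{eq:convex} componentwise: for $t \in (0,1]$, $F_j(x^{\ast} + t d) = F_j((1-t)x^{\ast} + t y) \le (1-t)F_j(x^{\ast}) + t F_j(y)$, hence
\[
\frac{F_j(x^{\ast} + t d) - F_j(x^{\ast})}{t} \le F_j(y) - F_j(x^{\ast}).
\]
Letting $t \downarrow 0$ gives $\nabla F_j(x^{\ast})^\top d \le F_j(y) - F_j(x^{\ast})$ for every $j$. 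In case (ii), the right-hand side is strictly negative for all $j$, so $\D(x^{\ast},d) < 0$ and $x^{\ast}$ is not Pareto critical, giving the contrapositive of (ii). In case (iii) one needs a little more care, since $F_j(y) - F_j(x^{\ast})$ is only $\le 0$ and may vanish for some $j$; here I would use \emph{strict} convexity to obtain, for the midpoint or any interior $t$, a strict inequality $F_j(\tfrac12 x^{\ast} + \tfrac12 y) < \tfrac12 F_j(x^{\ast}) + \tfrac12 F_j(y) \le F_j(x^{\ast})$, so that replacing $y$ by $z := \tfrac12(x^{\ast}+y)$ yields $F(z) \prec F(x^{\ast})$, and then the argument of case (ii) applies with $d := z - x^{\ast}$ to conclude $x^{\ast}$ is not Pareto critical.

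The main obstacle, such as it is, lies in case (iii): one must convert the nonstrict vector inequality $F(y) \preceq F(x^{\ast})$, $F(y)\neq F(x^{\ast})$ into a strict one in every component before passing to the directional derivative, and this is precisely where strict convexity (as opposed to plain convexity) is essential. Everything else is a routine application of the convexity inequality, the definition of the directional derivative, and the descent-direction characterization via $\D$ recalled in the preliminaries.
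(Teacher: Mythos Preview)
Your argument is correct in all three parts; the handling of (iii) via the midpoint $z=\tfrac12(x^\ast+y)$ to upgrade $F(y)\preceq F(x^\ast)$, $F(y)\neq F(x^\ast)$ (hence $y\neq x^\ast$) to $F(z)\prec F(x^\ast)$ is exactly the right use of strict convexity.

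Regarding comparison with the paper: note that the paper does not actually supply a proof of this lemma. It is stated as a preliminary result with a direct citation to \cite[Theorem~3.1]{mauriciobenarfliege}, so there is no in-paper argument to compare against. Your proof is the standard one and is in line with what is found in the cited reference.
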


The class of quasi-Newton methods used to solve \eqref{vectorproblem} consists of algorithms that compute the search direction $d(x)$ at a given point $x \in \mathbb{R}^n$ by solving the optimization problem:
\begin{equation}\label{probnewton}
\min_{d\in\R^n} \max_{j=1,\ldots,m} \nabla F_j (x)^\top d + \frac{1}{2}d^\top B_j d,
\end{equation}
where $B_j\in \R^{n\times n}$ serves as an approximation of $\HF_j(x)$ for all $j=1,\ldots,m$. If $B_j\succ 0$ for all $j=1,\ldots,m$, then the objective function of \eqref{probnewton} is strongly convex, ensuring a unique solution for this problem. We denote the optimal value of \eqref{probnewton} by $\theta(x)$, i.e.,
 \begin{equation}{\label{direction}}
d(x)\coloneqq\argmin_{d \in \R^n}\; \max_{j = 1,\dots,m }\nabla F_j (x)^\top d + \frac{1}{2}d^\top B_j d,
\end{equation} 
and
\begin{equation}{\label{theta}}
\theta(x) : = \max_{j = 1,\dots,m } \nabla F_j (x)^\top d(x) + \frac{1}{2}d(x)^\top B_j d(x).
\end{equation}
One natural approach is to use a BFGS--type formula, which updates the approximation $B_j$ in a way that preserves positive definiteness.
In the case where $B_j=I_n$ for all $j=1,\ldots,m$, $d(x)$ represents the steepest descent direction (see \cite{benar&fliege}). Similarly, if $B_j=\HF_j(x)$ for all $j=1,\ldots,m$, $d(x)$ corresponds to the Newton direction (see \cite{mauriciobenarfliege}).

In the following discussion, we assume that $B_j\succ 0$ for all $j=1,\ldots,m$. In this scenario, \eqref{probnewton} is equivalent to the convex quadratic optimization problem:
\begin{equation} \label{newtonproblem}
 \begin{array}{cl}
\ds\min_{(t,d)\in\R\times\R^n}    & \;  t          \\
\mbox{s. t.} & \nabla F_j (x)^\top d + \dfrac{1}{2}d^\top B_j d  \leq t,  \quad \forall j = 1,\dots,m.
\end{array}
\end{equation}
The unique solution to \eqref{newtonproblem} is given by $(t,d):=(\theta(x), d(x))$. Since \eqref{newtonproblem} is convex and has a Slater point (e.g., $(1, 0)\in\R\times\R^n$), there exists a multiplier $\lambda(x) \in \mathbb{R}^m$ such that the triple $(t,d,\lambda):=(\theta(x), d(x),\lambda(x)) \in\R\times\R^n\times\R^m$ satisfies its Karush-Kuhn-Tucker system given by:
\begin{equation}\label{kkt1}
\sum_{j=1}^{m} \lambda_j=1, \quad \sum_{j=1}^{m} \lambda_j\left[\nabla F_j (x) + B_jd\right]=0,
\end{equation}
\begin{equation}\label{kkt2}
\lambda_j\geq 0, \; \nabla F_j (x)^\top d + \frac{1}{2}d^\top B_j d \leq t, \quad \forall j=1,\dots,m,
\end{equation}
\begin{equation}\label{kkt3}
\lambda_j\left[\nabla F_j (x)^\top d + \frac{1}{2}d^\top B_j d-t\right]=0, \quad \forall j=1,\dots,m.
\end{equation}
In particular, \eqref{kkt1} and \eqref{kkt2} imply that
\begin{equation}\label{dN}
d(x) = - \left[ \sum_{j=1}^{m} \lambda_j(x)B_j \right]^{-1} \sum_{j=1}^{m}\lambda_j(x)\nabla F_j (x)
\end{equation}
and
\begin{equation}\label{lambdaN}
 \lambda(x) \in \Delta_m,
\end{equation}
where $\Delta_m$ represents the $m$-dimensional simplex defined as:
$$\Delta_m:=\{\lambda \in\R^m \mid \sum_{j=1}^m \lambda_j =1, \lambda \succeq 0\}.$$
  Now, by summing \eqref{kkt3} over $j=1,\ldots,m$ and using \eqref{dN}--\eqref{lambdaN}, we obtain
\begin{align}
  \theta(x)  & = \left[\sum_{j=1}^{m}\lambda_j\nabla F_j (x)\right]^\top d(x) + \frac{1}{2}d(x)^\top \left[ \sum_{j=1}^{m} \lambda_j(x)B_j \right] d(x) \nonumber \\
                  &=  - d(x)^\top \left[ \sum_{j=1}^{m} \lambda_j(x)B_j \right] d(x) + \frac{1}{2}d(x)^\top \left[ \sum_{j=1}^{m} \lambda_j(x)B_j \right] d(x) \nonumber \\
                  & = -\frac{1}{2} d(x)^\top \left[ \sum_{j=1}^{m} \lambda_j(x)B_j \right] d(x). \label{thetaN}
  \end{align}
  
\begin{lemma}\label{l:pareto} 
Let $d:\R^n\to\R^n$ and $\theta:\R^n\to\R$ given by \eqref{direction} and \eqref{theta}, respectively. Assume that $B_j\succ 0$ for all $j=1,\ldots,m$. Then, we have:
	\begin{itemize}
\item[(i)] $x$ is Pareto critical if and only if $d(x)=0$ and $\theta(x)=0$;
\item[(ii)] if $x$ is not Pareto critical, then $d(x)\neq0$ and $\D(x,d(x))< \theta(x)<0$ (in particular, $d(x)$ is a descent direction for $F$ at $x$).
	\end{itemize}
\end{lemma}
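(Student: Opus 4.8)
The plan is to combine the strong convexity of the subproblem objective with the $\D$-characterization of Pareto criticality recalled above. I would write $\varphi_x(d):=\max_{j=1,\ldots,m}\big(\nabla F_j(x)^\top d+\tfrac12 d^\top B_j d\big)$, so that $d(x)$ is the unique minimizer of $\varphi_x$ and $\theta(x)=\varphi_x(d(x))$; uniqueness holds because each of the $m$ maps $d\mapsto\nabla F_j(x)^\top d+\tfrac12 d^\top B_j d$ is strongly convex (with modulus $\lambda_{\min}(B_j)>0$, since $B_j\succ0$) and a finite maximum of strongly convex functions is strongly convex. Two elementary observations do most of the work: first, since $d=0$ is feasible with $\varphi_x(0)=0$, we always have $\theta(x)\le 0$; second, by uniqueness of the minimizer, $\theta(x)=0$ if and only if $d(x)=0$. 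Thus item (i) reduces to the equivalence ``$d(x)=0$ $\iff$ $x$ is Pareto critical''.

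For the forward direction of (i) I would use that Pareto criticality of $x$ means $\D(x,d)\ge 0$ for every $d\in\R^n$; since $\tfrac12 d^\top B_j d\ge 0$ gives $\varphi_x(d)\ge \max_j\nabla F_j(x)^\top d=\D(x,d)\ge 0$, we get $\theta(x)\ge 0$, which together with $\theta(x)\le 0$ forces $\theta(x)=0$ and hence $d(x)=0$. For the converse, assuming $d(x)=0$ (equivalently $\theta(x)=0$), I would argue by contradiction: if $x$ were not Pareto critical there would be $\bar d$ with $\D(x,\bar d)<0$, and then for $t>0$ small one has $\varphi_x(t\bar d)\le t\,\D(x,\bar d)+\tfrac{t^2}{2}\max_j\bar d^\top B_j\bar d<0$, contradicting $\theta(x)=\min_d\varphi_x(d)=0$.

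For (ii), if $x$ is not Pareto critical then (i) gives $d(x)\neq0$, and since $\theta(x)\le 0$ with equality only when $d(x)=0$, we obtain $\theta(x)<0$. For the strict gap $\D(x,d(x))<\theta(x)$ I would write, for each $j$,
\[
\nabla F_j(x)^\top d(x)=\Big(\nabla F_j(x)^\top d(x)+\tfrac12 d(x)^\top B_j d(x)\Big)-\tfrac12 d(x)^\top B_j d(x)\le \theta(x)-\tfrac12 d(x)^\top B_j d(x),
\]
and then set $c:=\min_{j=1,\ldots,m}\tfrac12 d(x)^\top B_j d(x)>0$ (positive because $d(x)\neq0$, each $B_j\succ0$, and the minimum is over the finite set $\{1,\ldots,m\}$), so that $\nabla F_j(x)^\top d(x)\le\theta(x)-c$ for all $j$; taking the maximum over $j$ yields $\D(x,d(x))\le\theta(x)-c<\theta(x)<0$, which in particular shows $d(x)$ is a descent direction for $F$ at $x$. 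Alternatively, $\theta(x)<0$ can be read off directly from \eqref{thetaN} using $\lambda(x)\in\Delta_m$ and $B_j\succ0$, since then $\sum_{j=1}^m\lambda_j(x)B_j\succ0$.

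There is no serious analytic obstacle here; the steps that need a little care are the converse of (i), where one must manufacture the small-step contradiction from a descent direction, and the (trivial but necessary) remark that a minimum over the finite index set preserves the strict positivity of $\tfrac12 d(x)^\top B_j d(x)$. I expect the full write-up to be short and elementary.
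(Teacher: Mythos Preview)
Your argument is correct and self-contained. The paper itself does not give a proof of this lemma: it simply refers the reader to \cite[Lemma~3.2]{mauriciobenarfliege} and \cite[Lemma~2]{POVALEJ2014765}. What you wrote is essentially the standard elementary derivation that those references contain---exploiting strong convexity of $\varphi_x$ for uniqueness, the feasibility of $d=0$ for $\theta(x)\le 0$, the small-step scaling $t\bar d$ for the converse of (i), and the decomposition $\nabla F_j(x)^\top d(x)=\varphi_x^{(j)}(d(x))-\tfrac12 d(x)^\top B_j d(x)\le \theta(x)-c$ for the strict inequality in (ii). So your proposal supplies a short explicit proof where the paper merely cites; there is nothing to correct.
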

\begin{proof} 
See  \cite[Lemma 3.2]{mauriciobenarfliege} and  \cite[Lemma 2]{POVALEJ2014765}.
  \end{proof}

As previously mentioned, if $B_j=I_n$ for all $j=1,\ldots,m$, the solution of \eqref{probnewton} corresponds to the steepest descent direction, denoted by $\dsd(x)$:
 \begin{equation}{\label{directionsd}}
\dsd(x)\coloneqq\argmin_{d \in \R^n}\; \max_{j = 1,\dots,m }\nabla F_j (x)^\top d + \frac{1}{2}\|d\|^2.
\end{equation} 
Taking the above discussion into account, we can observe that there exists
\begin{equation}{\label{lsd}}
		\lambda^{SD}(x)\in\Delta_m
\end{equation}
such that
\begin{equation}{\label{dsd}}
\dsd(x) =- \ds\sum_{j=1}^{m} \lambda^{SD}_j(x)\nabla F_j (x).
\end{equation}
Next, we will review some useful properties related to $\dsd(\cdot)$.
\begin{lemma} \label{l:vtheta}
Let $d_{SD}:\R^n\to\R^n$ be given by \eqref{directionsd}. Then:
\begin{itemize}
\item[(i)] $x$ is Pareto critical if and only if $d_{SD}(x)=0$;
\item[(ii)] if $x$ is not Pareto critical, then we have $d_{SD}(x)\neq0$ and  $\D(x,d_{SD}(x))<-(1/2)\|d_{SD}(x)\|^2<0$ (in particular, $d_{SD}(x)$ is a descent direction for $F$ at $x$);
\item[(iii)] the mapping $d_{SD}(\cdot)$ is continuous;
\item[(iv)] for any $x\in\R^n$, $-d_{SD}(x)$ is the minimal norm element of the set 
$$\{u\in\R^n \mid u = \ds\sum_{j=1}^{m} \lambda_j\nabla F_j (x),\lambda\in\Delta_m \},$$
 i.e., in the convex hull of $\{\nabla F_1(x),\ldots,\nabla F_m(x)\}$;
 \item[(v)] if $\nabla F_j$, $j=1,\ldots,m$, are $L$-Lipschitz continuous on a nonempty set $U\subset \R^n$,  i.e.,
 $$\|\nabla F_j(x)-\nabla F_j(y)\|\leq L\|x-y\|, \quad \forall x,y\in U, \quad \forall j=1,\ldots,m,$$
 then the mapping $x\mapsto \|d_{SD}(x)\|$ is also $L$-Lipschitz continuous on $U$.
\end{itemize}
\end{lemma}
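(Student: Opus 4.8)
The plan is to treat the five items in an order that reflects their logical dependence rather than their listing: first the two criticality statements (i)--(ii) as specializations of Lemma~\ref{l:pareto}, then the variational characterization (iv), and finally (iii) and (v), which are most cleanly derived from what precedes. For (i) and (ii) I would apply Lemma~\ref{l:pareto} with the admissible choice $B_j = I_n$ for all $j$ (legitimate since $I_n \succ 0$): then $d(x) = \dsd(x)$, and \eqref{thetaN} together with $\sum_{j=1}^m \lambda_j(x) = 1$ (from \eqref{lambdaN}) gives $\theta(x) = -\tfrac12\|\dsd(x)\|^2$. Item (i) is then exactly Lemma~\ref{l:pareto}(i), and substituting this expression for $\theta(x)$ into Lemma~\ref{l:pareto}(ii) yields, for non-critical $x$, that $\dsd(x)\neq 0$ and $\D(x,\dsd(x)) < -\tfrac12\|\dsd(x)\|^2 < 0$, which is (ii). (These two facts could alternatively just be quoted from~\cite{benar&fliege}.)

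Item (iv) is the structural core, and I would obtain it by a minimax/duality argument. Rewriting the inner max in \eqref{directionsd} as $\max_{\lambda\in\Delta_m}\sum_{j=1}^m\lambda_j\nabla F_j(x)^\top d$, the function $(d,\lambda)\mapsto \sum_j\lambda_j\nabla F_j(x)^\top d + \tfrac12\|d\|^2$ is strongly convex in $d$ and linear (hence concave) in $\lambda$, with $\Delta_m$ compact convex, so a minimax theorem (e.g. Sion's) permits interchanging $\min_d$ and $\max_{\lambda}$. The inner minimization over $d$ is explicit, $d = -\sum_j\lambda_j\nabla F_j(x)$ with optimal value $-\tfrac12\|\sum_j\lambda_j\nabla F_j(x)\|^2$, so the problem reduces to $\max_{\lambda\in\Delta_m}\bigl(-\tfrac12\|\sum_j\lambda_j\nabla F_j(x)\|^2\bigr)$, i.e. to finding the minimal-norm element $u^\star$ of the convex hull $C(x) := \{\sum_{j=1}^m\lambda_j\nabla F_j(x) : \lambda\in\Delta_m\}$. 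Tracking the primal--dual correspondence then shows that the optimal $d$ in \eqref{directionsd} equals $-u^\star$ (consistently with \eqref{dsd}), which is precisely (iv).

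For (iii) I would argue directly from strict convexity (which gives uniqueness of $\dsd(x)$) together with an a priori bound. Take $x_k\to\bar x$ and set $d_k := \dsd(x_k)$. Comparing the objective in \eqref{directionsd} at $d_k$ with its value $0$ at $d=0$ gives $\max_j\nabla F_j(x_k)^\top d_k + \tfrac12\|d_k\|^2 \le 0$, whence $\|d_k\| \le 2\max_j\|\nabla F_j(x_k)\|$; since the $\nabla F_j$ are continuous and $\{x_k\}$ converges, $\{d_k\}$ is bounded. If $d_{k_l}\to\bar d$ along a subsequence, then letting $l\to\infty$ in the optimality inequality $\max_j\nabla F_j(x_{k_l})^\top d_{k_l} + \tfrac12\|d_{k_l}\|^2 \le \max_j\nabla F_j(x_{k_l})^\top d + \tfrac12\|d\|^2$, valid for every $d\in\R^n$, and using continuity of the $\nabla F_j$ and of the max, shows $\bar d$ solves \eqref{directionsd} at $\bar x$; uniqueness forces $\bar d = \dsd(\bar x)$. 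As every subsequential limit of the bounded sequence $\{d_k\}$ equals $\dsd(\bar x)$, the whole sequence converges, proving continuity.

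Item (v) is then a short consequence of (iv): by (iv), $\|\dsd(x)\| = \mathrm{dist}(0, C(x))$. If each $\nabla F_j$ is $L$-Lipschitz on $U$, then for every $\lambda\in\Delta_m$ and $x,y\in U$, $\|\sum_j\lambda_j\nabla F_j(x) - \sum_j\lambda_j\nabla F_j(y)\| \le \sum_j\lambda_j\|\nabla F_j(x)-\nabla F_j(y)\| \le L\|x-y\|$, so the Hausdorff distance between $C(x)$ and $C(y)$ is at most $L\|x-y\|$; since $\mathrm{dist}(0,\cdot)$ is $1$-Lipschitz in the Hausdorff metric on nonempty compact sets, $\bigl|\,\|\dsd(x)\| - \|\dsd(y)\|\,\bigr| \le L\|x-y\|$. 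I expect the genuine work to be concentrated in (iv) --- justifying the $\min$--$\max$ interchange and correctly identifying the primal optimizer with $-u^\star$ --- after which (iii) and (v) are routine; a legitimate shortcut is to cite (i)--(iv) from~\cite{benar&fliege} and reduce the task to establishing (v).
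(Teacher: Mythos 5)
Your proposal is correct, but it does substantially more than the paper, which disposes of this lemma entirely by citation: items (i)--(iii) to \cite[Lemma~3.3]{mauricio&benar} and items (iv)--(v) to \cite[Corollary~2.3]{Sbenar} and \cite[Theorem~3.1]{Sbenar}, with no argument given. Your self-contained route is sound at every step: deriving (i)--(ii) by specializing Lemma~\ref{l:pareto} with $B_j=I_n$ (so that $\theta(x)=-\tfrac12\|\dsd(x)\|^2$ via \eqref{thetaN} and \eqref{lambdaN}) is exactly the intended relationship between the two lemmas; the minimax derivation of (iv) is the standard way the dual representation \eqref{dsd} is obtained in the literature, though the phrase ``tracking the primal--dual correspondence'' compresses a step worth writing out, namely that if $u^\star$ is the projection of $0$ onto $C(x)$ then $\langle u,u^\star\rangle\geq\|u^\star\|^2$ for all $u\in C(x)$, which is what certifies that $(-u^\star,\lambda^\star)$ is a saddle point and hence that the unique primal minimizer is $-u^\star$; the boundedness-plus-uniqueness argument for (iii) and the Hausdorff-distance argument for (v) are both complete. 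What your approach buys is independence from the cited sources and a transparent view of why (v) reduces to (iv); what the paper's citation buys is brevity, since these facts are genuinely standard in the multiobjective steepest-descent literature. One cosmetic slip: the constant in your a priori bound for (iii) should be $\|d_k\|\leq 2\max_j\|\nabla F_j(x_k)\|$ as you wrote, which indeed follows from $\tfrac12\|d_k\|^2\leq-\max_j\nabla F_j(x_k)^\top d_k\leq\max_j\|\nabla F_j(x_k)\|\,\|d_k\|$; no issue there, just make that chain explicit.
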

\begin{proof} 
For items {\it (i)}, {\it (ii)}, and {\it (iii)}, see~\cite[Lemma~3.3]{mauricio&benar}. For items {\it (iv)} and {\it (v)}, see \cite[Corollary~2.3]{Sbenar} and \cite[Theorem~3.1]{Sbenar}, respectively.
\end{proof}

We end this section by presenting an auxiliary result.

\begin{lemma} \label{l:natal}
The following statements are true.
\begin{itemize}
\item[(i)] The function $h(t):=1-t+\ln(t)$ is nonpositive for all $t>0$.
\item[(ii)] For any $\bar{t}<1$, we have $\ln(1-\bar{t})\geq -\bar{t}/(1-\bar{t})$.
\end{itemize}
\end{lemma}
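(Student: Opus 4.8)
The plan is to prove the two items separately by elementary single-variable calculus, since each is a statement about a concrete real function.

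For item (i), I would study $h(t) = 1 - t + \ln t$ on $(0,\infty)$. Differentiating gives $h'(t) = -1 + 1/t = (1-t)/t$, so $h'(t) > 0$ for $t \in (0,1)$, $h'(1) = 0$, and $h'(t) < 0$ for $t > 1$. Hence $h$ is strictly increasing on $(0,1)$ and strictly decreasing on $(1,\infty)$, so $t=1$ is the global maximizer of $h$ on $(0,\infty)$. Since $h(1) = 1 - 1 + \ln 1 = 0$, it follows that $h(t) \le 0$ for all $t > 0$, which is the claim. (One could alternatively invoke concavity of $\ln$ together with the tangent-line inequality $\ln t \le t - 1$, which is literally the same statement rearranged.)

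For item (ii), fix $\bar t < 1$, so that $1 - \bar t > 0$ and $\ln(1-\bar t)$ is defined. I would apply item (i) — or rather the reciprocal form of it — with a suitable argument. Concretely, apply the inequality $\ln s \ge 1 - 1/s$ (valid for $s > 0$, obtained from item (i) applied to $1/s$: $h(1/s) = 1 - 1/s + \ln(1/s) = 1 - 1/s - \ln s \le 0$) to $s = 1 - \bar t > 0$. This yields
\begin{equation*}
\ln(1-\bar t) \ge 1 - \frac{1}{1-\bar t} = \frac{(1-\bar t) - 1}{1-\bar t} = \frac{-\bar t}{1-\bar t},
\end{equation*}
which is exactly the desired bound.

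There is no real obstacle here; both items are standard calculus facts and the only thing to be careful about is the domain condition ($t > 0$ in (i), $\bar t < 1$ in (ii)) so that all logarithms are well defined, and getting the direction of the inequality right when passing from $h(1/s) \le 0$ to the lower bound on $\ln s$. The slightly less obvious step is recognizing that item (ii) is just item (i) re-expressed after substituting $t = 1/(1-\bar t)$; once that substitution is made, the algebra is immediate.
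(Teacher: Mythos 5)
Your proposal is correct and follows essentially the same route as the paper: item (ii) is obtained by exactly the substitution $t=1/(1-\bar t)$ into item (i), which is what the paper does. The only difference is that for item (i) the paper simply cites a textbook exercise, whereas you supply the standard first-derivative argument; that argument is sound.
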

\begin{proof}
For item~{\it (i)}, see \cite[Exercise 6.8]{NoceWrig06}. For item~{\it (ii)}, consider $\bar{t}<1$. By applying item~{\it (i)} with $t=1/(1-\bar{t})$, we obtain
\[0\geq h\Big(\dfrac{1}{1-\bar{t}}\Big)=1-\dfrac{1}{1-\bar{t}}+\ln\Big(\dfrac{1}{1-\bar{t}}\Big)=-\dfrac{\bar{t}}{1-\bar{t}}-\ln(1-\bar{t}).\]
\end{proof}
  
\section{The algorithm and its global convergence}   \label{sec:algorithm}

In this section, we define the main algorithm employed in this paper and study its global convergence, with a particular focus on nonconvex multiobjective optimization problems.
Let us suppose that the following usual assumptions are satisfied.

\begin{assumption}\label{assumption1} 
\textit{(i)} $F$ is continuously differentiable.\\
\textit{(ii)}  The level set $\Ls:=\{x\in\R^n\mid F(x) \; \preceq \; F(x^0)\}$ is bounded, where $x^0\in\R^n$ is the given starting point.\\
\textit{(iii)} There exists an open set ${\cal N}$ containing $\Ls$ such that $\nabla F_j$ is $L$-Lipschitz continuous on ${\cal N}$ for all $j = 1, \dots, m$, i.e.,
$$\|\nabla F_j(x)-\nabla F_j(y)\|\leq L \|x-y\|, \quad \forall x,y\in{\cal N}, \quad \forall j = 1,\ldots,m.$$
\end{assumption}

The algorithm is formally described as follows.

\noindent\rule{\textwidth}{1pt}\vspace{-10pt}
\begin{algorithm}\label{alg:MBFGS}
{ \normalfont \bf A BFGS-type algorithm for nonconvex problems}

	\begin{description}\normalfont
		Let $\rho\in(0,1/2)$, $\sigma\in(\rho,1)$, $0<\underline{\vartheta}\leq \bar{\vartheta}$, $x^0\in\mathbb{R}^n$, and $B_j^0\succ 0$ for all $j=1,\ldots,m$ be given. Initialize $k \leftarrow 0$.
		
		\item[Step 1.]  \textit{Compute the search direction}\\
		Compute $d^k \coloneqq d(x^k)$ as in \eqref{direction}. 
		
		\item[Step 2.] \textit{Stopping criterion}\\
		If $x^k$ is Pareto critical, then STOP.
		
		\item[Step 3.]  \textit{Line search procedure}\\
		Compute a step size $\alpha_{k}>0$ (trying first $\alpha_{k}=1$) such that
		\begin{align} 
			F_j(x^k+\alpha_k d^k) & \leq F_j(x^k)+\rho \alpha_k \D(x^k,d^k),  \quad \forall j = 1,\ldots,m, \label{wolfe1}  \\ 
	  	        \D(x^k+\alpha_kd^k,d^k) & \geq \sigma \D(x^k,d^k), \label{wolfe2}
		\end{align}
		and set $x^{k+1} \coloneqq x^k + \alpha_k d^k$.		
		
		\item[Step 4.] \textit{Prepare the next iteration}\\
		Compute
		\begin{equation}\label{etajk}
		\eta_j^k = \dfrac{(y_{j}^k)^\top s^k}{\|s^k\|^2}, \quad \forall j = 1,\ldots,m,
		\end{equation}
		where $s^k \coloneqq x^{k+1}-x^{k}$ and $y_j^k \coloneqq \nabla F_{j}(x^{k+1})-\nabla F_j(x^k)$. Choose $\mu^k\in\Delta_m$ and $\vartheta_k\in(\underline{\vartheta},\bar{\vartheta})$, and define
		\begin{equation}\label{rjk}
		r_j^k := \max \{-\eta_j^k, 0\} + \vartheta_k\|\sum_{i=1}^m\mu_i^k \nabla F_i(x^k)\|,  \quad \forall j = 1,\ldots,m,
		\end{equation}
		and
		\begin{equation}\label{gammajk}
		\gamma_j^k  \coloneqq y_j^k + r_j^ks^k,  \quad \forall j = 1,\ldots,m.
		\end{equation}
		
		\item[Step 5.]  \textit{Update the BFGS-type matrices}\\
		Define
		\begin{equation}\label{mBFGS}
			B_j^{k+1} \coloneqq B_j^{k} - \dfrac{B_j^{k}s^k(s^k)^\top B_j^{k}}{(s^k)^\top B_j^ks^k} +\dfrac{\gamma_j^k(\gamma_j^k)^\top }{(\gamma_j^{k})^\top s^k},  \quad \forall j = 1,\ldots,m.  \\
		\end{equation}
		Set $k\leftarrow k+1$ and go to Step~1.
		
	\end{description}
\vspace{-10pt}\noindent\rule{\textwidth}{1pt}
\end{algorithm}

Some comments are in order. 
First, by expressing the search direction subproblem \eqref{probnewton} as the convex quadratic optimization problem \eqref{newtonproblem}, we can apply well-established techniques and solvers to find its solution at Step~1.
 Second, some practical stopping criteria can be considered at Step~2. It is usual to use the {\it gap} function $\theta(x^k)$ in \eqref{theta} or the norm of $\dsd(x^k)$ in \eqref{directionsd} to measure criticality, see Lemmas~\ref{l:pareto} and \ref{l:vtheta}, respectively.
 Third, at Step 3, we require that $\alpha_k$ satisfies \eqref{wolfe1}--\eqref{wolfe2}, which corresponds to the multiobjective standard Wolfe conditions originally introduced in \cite{cgvo}.
 Under Assumption~\ref{assumption1}{\it(i)}--{\it(ii)}, given $d^k\in\R^n$ a descent direction for $F$ at $x^k$, it is possible to show that there are intervals of positive step sizes satisfying \eqref{wolfe1}--\eqref{wolfe2}, see \cite[Proposition~3.2]{cgvo}.
As we will see, under suitable assumptions, the unit step size $\alpha_k=1$  is eventually accepted, which is essential to obtain {\it fast} convergence.
Furthermore, an algorithm to calculate Wolfe step sizes for vector-valued problems was proposed in \cite{PerezPrudente2019}.
Fourth, the usual BFGS scheme for $F_j$ consists of the update formula given in \eqref{mBFGS} with $y_j^k$ in place of $\gamma_j^k$.
In this case, the product $(y_j^k)^\top s^k$ in the denominator of the third term on the right-hand side of \eqref{mBFGS} can be nonpositive for some $j\in\{1,\ldots,m\}$, even when the step size satisfies the Wolfe conditions~\eqref{wolfe1}--\eqref{wolfe2}, see \cite[Example~3.3]{Leandro&Danilo}.
This implies that update scheme (with $y_j^k$ in place of $\gamma_j^k$) may fail to preserve positive definiteness of $B_j^k$.
Fifth, note that $B_j^{k+1}s^k= y_j^k + r_j^ks^k$ for each $j=1,\ldots,m$. Thus, if $r_j^k$ is {\it small}, this relation can be seen as an approximation of the well-known secant equation $B_j^{k+1}s^k= y_j^k$ for $F_j$, see \cite{Li-Fu-MBFGS}.

\begin{theorem} \label{teo:well}
Algorithm~\ref{alg:MBFGS} is well-defined.
\end{theorem}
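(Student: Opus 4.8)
The plan is to argue by induction on the iteration counter $k$ that, as long as the stopping test in Step~2 is not triggered, every instruction of Algorithm~\ref{alg:MBFGS} can be carried out and the matrices it produces satisfy $B_j^k\succ 0$ for all $j=1,\ldots,m$. The base case $k=0$ holds by the choice of the initial data, since $B_j^0\succ 0$ is assumed. For the inductive step, suppose $B_j^k\succ 0$ for all $j$.

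First I would check Steps~1--4. Since $\sum_{j=1}^m\lambda_j B_j\succ 0$ for every $\lambda\in\Delta_m$, the quadratic program \eqref{newtonproblem} (equivalently \eqref{probnewton}) has a strongly convex objective on its feasible slices and admits the unique solution $(\theta(x^k),d^k)$; hence Step~1 is executable. If $x^k$ is Pareto critical the algorithm stops at Step~2; otherwise Lemma~\ref{l:pareto}{\it(ii)} gives $d^k\neq 0$ and $\D(x^k,d^k)<0$, i.e.\ $d^k$ is a descent direction for $F$ at $x^k$. Under Assumption~\ref{assumption1}{\it(i)}--{\it(ii)} this is precisely the setting of \cite[Proposition~3.2]{cgvo}, which guarantees the existence of step sizes $\alpha_k>0$ satisfying the Wolfe conditions \eqref{wolfe}; in particular the Armijo-type inequality in \eqref{wolfe} forces $F(x^{k+1})\preceq F(x^k)$, so $x^{k+1}\in\Ls$, and Step~3 is well-defined. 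Step~4 consists only of explicit evaluations (a $\mu^k\in\Delta_m$ and a $\vartheta_k$ in the prescribed range can always be chosen), hence is trivially executable.

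It then remains to verify that Step~5 returns $B_j^{k+1}\succ 0$. Since $x^k$ is not Pareto critical, $d^k\neq 0$ and $\alpha_k>0$ yield $s^k=\alpha_k d^k\neq 0$, so $(s^k)^\top B_j^k s^k>0$ and the second term of \eqref{mBFGS} is well-defined. For the third term we must show $(\gamma_j^{k})^\top s^k>0$ for all $j$, which is exactly the chain \eqref{curitiba}: using $\gamma_j^k=y_j^k+r_j^k s^k$ and the definition of $\eta_j^k$ one gets $(\gamma_j^{k})^\top s^k=(\eta_j^k+r_j^k)\|s^k\|^2$; by \eqref{rjk}, $\eta_j^k+r_j^k\geq \vartheta_k\|\sum_{i=1}^m\mu_i^k\nabla F_i(x^k)\|$, and since $\sum_{i=1}^m\mu_i^k\nabla F_i(x^k)$ lies in the convex hull of $\{\nabla F_1(x^k),\ldots,\nabla F_m(x^k)\}$ while $-\dsd(x^k)$ is its minimal-norm element (Lemma~\ref{l:vtheta}{\it(iv)}), we obtain $(\gamma_j^{k})^\top s^k\geq \underline{\vartheta}\,\|\dsd(x^k)\|\,\|s^k\|^2>0$, the strictness coming from $\dsd(x^k)\neq 0$ (Lemma~\ref{l:vtheta}{\it(i)}). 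With $B_j^k\succ 0$ and the curvature condition $(\gamma_j^{k})^\top s^k>0$ in hand, the classical argument for the BFGS update (see, e.g., \cite[Section~6.1]{NoceWrig06}) gives $B_j^{k+1}\succ 0$, closing the induction.

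I do not expect a genuine obstacle: the only substantive point is the strict positivity of $(\gamma_j^{k})^\top s^k$, which is precisely what the correction term $\vartheta_k\|\sum_{i=1}^m\mu_i^k\nabla F_i(x^k)\|$ in \eqref{rjk} was designed to enforce and which has in fact already been established in the discussion preceding the statement. The proof is therefore essentially a consolidation of \eqref{curitiba} with three standard facts: a strongly convex quadratic program has a unique minimizer, a Wolfe line search along a descent direction is feasible under Assumption~\ref{assumption1}{\it(i)}--{\it(ii)}, and a BFGS update with positive curvature preserves positive definiteness.
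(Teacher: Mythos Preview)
Your proposal is correct and follows essentially the same approach as the paper: the paper's proof simply states that the result is a consequence of the discussion preceding the theorem, and your inductive argument spells out exactly that discussion---the solvability of the strongly convex subproblem, Lemma~\ref{l:pareto}{\it(ii)} for descent, existence of Wolfe step sizes via \cite[Proposition~3.2]{cgvo}, and the key chain \eqref{curitiba} ensuring $(\gamma_j^k)^\top s^k>0$ so that the BFGS update preserves positive definiteness.
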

\begin{proof}
The proof is by induction. We start by assuming that $B_j^k\succ 0$ for all $j=1,\ldots,m$, which is trivially true for $k=0$. 
This makes subproblem \eqref{direction} in Step~1 solvable. 
If $x^k$ is Pareto critical, Algorithm~\ref{alg:MBFGS} stops at Step~2, thereby concluding the proof.
Otherwise, Lemma~\ref{l:pareto}{\it (ii)} implies that $d^k$ is a descent direction of $F$ at $x^k$. 
Thus, taking into account Assumption~\ref{assumption1}{\it(i)}--{\it(ii)}, there exist intervals of positive step sizes satisfying conditions \eqref{wolfe1}--\eqref{wolfe2}, as shown in \cite[Proposition~3.2]{cgvo}.
As a result, $x^{k+1}$ can be properly defined in Step~3. 
To complete the proof, let us show that $B_j^{k+1}$ remains positive definite for all $j=1,\ldots,m$.
By the definitions of $\gamma_j^{k}$ and $\eta_j^k$ in \eqref{gammajk} and \eqref{etajk}, respectively, we have
$$(\gamma_j^{k})^\top s^k =(y_j^k)^\top s^k + r_j^k \|s^k\|^2 = \left(\dfrac{(y_j^k)^\top s^k}{\|s^k\|^2} + r_j^k\right) \|s^k\|^2=(\eta_j^k+ r_j^k)  \|s^k\|^2, \quad \forall j=1,\ldots,m.$$
Therefore, by the definition of $r_j^k$ in \eqref{rjk}, Lemma~\ref{l:vtheta}{\it(iv)}, and Lemma~\ref{l:vtheta}{\it(ii)}, it follows that
\begin{equation}\label{curitiba}
(\gamma_j^{k})^\top s^k\geq \vartheta_k\|\sum_{i=1}^m\mu_i^k \nabla F_i(x^k)\| \|s^k\|^2\geq \underline{\vartheta} \|\dsd(x^k)\| \|s^k\|^2>0, \quad \forall j=1,\ldots,m.
\end{equation}
Thus, the updating formulas \eqref{mBFGS} are well-defined. Now, for each $j\in\{1,\ldots,m\}$ and any nonzero vector $z$, we have
\begin{equation}\label{eq:cs}
z^\top B_j^{k+1} z = \|z\|_{B_j^{k}}^2-\dfrac{\langle z,s^k \rangle_{B_j^{k}}^2}{\|s^k\|_{B_j^{k}}^2}+\dfrac{(z^\top\gamma_j^k)^2}{(\gamma_j^{k})^\top s^k} \geq \dfrac{(z^\top\gamma_j^k)^2}{(\gamma_j^{k})^\top s^k}\geq 0,
\end{equation}
where the first inequality is a direct consequence of the Cauchy-Schwarz inequality, which gives $\langle z,s^k \rangle_{B_j^{k}}^2\leq \|z\|_{B_j^{k}}^2 \|s^k\|_{B_j^{k}}^2$.
Finally, assume by contradiction that $z^\top B_j^{k+1} z=0$.
In this case, it follows from \eqref{eq:cs} that
\begin{equation}\label{eq:cs2}
z^\top\gamma_j^k=0 \quad \mbox{and} \quad  \|z\|_{B_j^{k}}^2-\dfrac{\langle z,s^k \rangle_{B_j^{k}}^2}{\|s^k\|_{B_j^{k}}^2}=0.
\end{equation}
The second equation in \eqref{eq:cs2} implies that $|\langle z,s^k \rangle_{B_j^{k}}|= \|z\|_{B_j^{k}} \|s^k\|_{B_j^{k}}$, so there exists $\tau\in\R$ such that $z=\tau s^k$.
Combining this with the first equation in \eqref{eq:cs2}, we obtain $\tau (\gamma_j^{k})^\top s^k=0$. Taking into account \eqref{curitiba}, we can deduce that $\tau=0$, which contradicts the fact that $z$ is a nonzero vector.
\end{proof}

Hereafter, we assume that $x^k$ is not Pareto critical for all $k\geq 0$. 
Thus, Algorithm~\ref{alg:MBFGS} generates an infinite sequence of iterates.
The following result establishes that the sequence $\{x^k,d^k\}$ satisfies a Zoutendijk-type condition, which will be crucial in our analysis. Its proof is based on \cite[Proposition~3.3]{cgvo} and will be provided in the Appendix.

\begin{proposition}\label{prop:zout}
Consider the sequence $\{x^k,d^k\}$ generated by Algorithm~\ref{alg:MBFGS}. Then,
\begin{equation}\label{Zout}
	\sum_{k\geq 0} \dfrac{\D(x^k,d^k)^2}{\|d^k\|^2} < \infty.
\end{equation}
\end{proposition}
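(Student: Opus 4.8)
The plan is to adapt the classical Zoutendijk argument for Wolfe line searches to the vector-valued setting, with the scalar directional derivative replaced by $\D(x^k,d^k)=\max_{j}\nabla F_j(x^k)^\top d^k$. Since $x^k$ is assumed not Pareto critical and the matrices $B_j^k$ are positive definite (Theorem~\ref{teo:well}), Lemma~\ref{l:pareto}{\it(ii)} gives $d^k\neq 0$ and $\D(x^k,d^k)<0$, so every quotient in \eqref{Zout} is well defined and Step~3 produces some $\alpha_k>0$ satisfying \eqref{wolfe}. First I would record that the first inequality in \eqref{wolfe} forces $F(x^{k+1})\prec F(x^k)$, hence inductively $x^k\in\Ls$ for all $k$; in particular every iterate lies in the set ${\cal N}$ on which $\nabla F_j$ is $L$-Lipschitz, and, $\Ls$ being closed (by continuity of $F$) and bounded (Assumption~\ref{assumption1}{\it(ii)}), each $F_j$ is bounded below on $\Ls$.

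The core step is a lower bound on the step size $\alpha_k$. Let $j_k$ be an index attaining the maximum that defines $\D(x^{k+1},d^k)$. The curvature condition in \eqref{wolfe} gives $\nabla F_{j_k}(x^{k+1})^\top d^k\geq \sigma\,\D(x^k,d^k)$, while $\nabla F_{j_k}(x^k)^\top d^k\leq \D(x^k,d^k)$ by definition of the maximum; subtracting these yields
\[
\big(\nabla F_{j_k}(x^{k+1})-\nabla F_{j_k}(x^k)\big)^\top d^k\;\geq\;(\sigma-1)\,\D(x^k,d^k)\;=\;(1-\sigma)\big(-\D(x^k,d^k)\big)\;>\;0 .
\]
On the other hand, Cauchy--Schwarz together with the $L$-Lipschitz continuity of $\nabla F_{j_k}$ (applicable since $x^k,x^{k+1}\in{\cal N}$) bounds the left-hand side above by $L\,\alpha_k\,\|d^k\|^2$. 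Combining the two inequalities gives $\alpha_k\geq (1-\sigma)\big(-\D(x^k,d^k)\big)/\big(L\|d^k\|^2\big)$.

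Finally I would substitute this bound into the sufficient-decrease inequality: using $\D(x^k,d^k)<0$, for every $j=1,\ldots,m$,
\[
F_j(x^{k+1})\;\leq\;F_j(x^k)+\rho\,\alpha_k\,\D(x^k,d^k)\;\leq\;F_j(x^k)-\frac{\rho(1-\sigma)}{L}\,\frac{\D(x^k,d^k)^2}{\|d^k\|^2}.
\]
Telescoping over $k=0,\dots,N-1$ and using that $F_j$ is bounded below on $\Ls$ shows that the partial sums of \eqref{Zout} are bounded by $\dfrac{L}{\rho(1-\sigma)}\big(F_j(x^0)-\inf_{\Ls}F_j\big)<\infty$ uniformly in $N$, which is exactly \eqref{Zout}. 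The one delicate point is the handling of the $\max$ in the curvature condition: one must pick the maximizing index $j_k$ of $\D(x^{k+1},d^k)$ (not of $\D(x^k,d^k)$) so that, after telescoping, the decrease estimate holds simultaneously for every component $F_j$; the remaining manipulations are a routine transcription of the single-objective proof.
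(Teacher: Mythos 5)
Your proof is correct and is essentially the argument the paper relies on: the paper simply cites \cite[Proposition~3.3]{cgvo}, and that reference establishes the Zoutendijk condition by exactly this route — choosing the index attaining the maximum in $\D(x^{k+1},d^k)$, combining the curvature condition with Lipschitz continuity of the gradients on ${\cal N}\supset\Ls$ to get the lower bound $\alpha_k\geq (1-\sigma)(-\D(x^k,d^k))/(L\|d^k\|^2)$, and then telescoping the sufficient-decrease inequality for a single component $F_j$ bounded below on the compact set $\Ls$. Your choice of the maximizing index at $x^{k+1}$ is indeed the right one (the inequalities would not chain with the index maximizing $\D(x^k,d^k)$), though note the telescoping only needs to be carried out for one fixed $j$, not simultaneously for all components.
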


Our analysis also exploits insights developed by Byrd and Nocedal  \cite{Tool} in their analysis of the classical BFGS method for single-valued optimization (i.e., for $m=1$).
They provided sufficient conditions that ensure that the angle between $s^k$ and $B_1^ks^k$ (which coincides with the angle between $d^k$ and $-\nabla F_1(x^k)$ in the scalar case)  remains far from $0$ for an arbitrary fraction of the iterates.
Recently, this result was studied in the multiobjective setting in \cite{Leandro&Danilo}.
Under some mild conditions, similar to the approach taken in \cite{Leandro&Danilo}, we establish that the angles between $s^k$ and $B_j^ks^k$ remain far from $0$, {\it simultaneously} for all objectives, for an arbitrary fraction $p$ of the iterates.
The proof of this result can be constructed as a combination of \cite[Theorem~2.1]{Tool} and \cite[Lemma~4.2]{Leandro&Danilo} and will therefore be postponed to the Appendix.

\begin{proposition}\label{l:poa}
Consider the sequence $\{x^k\}$ generated by Algorithm~\ref{alg:MBFGS}.
Let $\beta_j^k$ be the angle between the vectors $s^k$ and $B_j^ks^k$, for all $k\geq 0$ and $j=1,\ldots,m$.
Assume that
\begin{equation}\label{atletico}
\dfrac{(\gamma_j^{k})^\top s^k}{\|s^k\|^2}\geq C_1  \quad \mbox{and} \quad \dfrac{\|\gamma_j^{k}\|^2}{(\gamma_j^{k})^\top s^k}\leq C_2,  \quad \forall j=1,\ldots,m, \quad \forall k\geq 0,
\end{equation}
for some positive constants $C_1,C_2>0$.
Then, given $p\in(0,1)$, there exists a constant $\delta>0$ such that, for all $k \geq 1$, the relation
$$\cos \beta_j^{\ell} \geq \delta, \quad \forall j=1,\ldots,m,$$
holds for at least $\lceil p(k+1) \rceil$ values of $\ell \in \{0,1,\ldots,k\}$, where $\lceil \cdot \rceil$ denotes the ceiling function.
\end{proposition}

The following technical result forms the basis for applying Proposition~\ref{l:poa}.

\begin{lemma}\label{l:goiania}
Let $\{x^k\}$ be a sequence generated by Algorithm~\ref{alg:MBFGS}.
Then, for each j = 1,\ldots,m and all $k\geq 0$, there exist positive constants $c_1,c_2>0$ such that:
\begin{itemize}
\item[(i)] $\dfrac{(\gamma_j^{k})^\top s^k}{\|s^k\|^2}\geq c_1 \|\dsd(x^k)\|$;
\item[(ii)] $\dfrac{\|\gamma_j^{k}\|^2}{(\gamma_j^{k})^\top s^k}\leq \dfrac{c_2}{\|\dsd(x^k)\|}$.
\end{itemize}
\end{lemma}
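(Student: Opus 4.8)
The goal is to bound the two quantities $(\gamma_j^k)^\top s^k / \|s^k\|^2$ and $\|\gamma_j^k\|^2/(\gamma_j^k)^\top s^k$ in terms of $\|\dsd(x^k)\|$, using only the structure of $\gamma_j^k = y_j^k + r_j^k s^k$ together with the Lipschitz hypothesis (Assumption~\ref{assumption1}) and the fact that the iterates stay in the compact level set $\Ls$. First I would record the identity, already derived in the discussion preceding Theorem~\ref{teo:well}, that
\[
(\gamma_j^k)^\top s^k = (\eta_j^k + r_j^k)\|s^k\|^2,
\]
so that $(\gamma_j^k)^\top s^k/\|s^k\|^2 = \eta_j^k + r_j^k$. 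For item (i), the computation in \eqref{curitiba} shows directly that $\eta_j^k + r_j^k \geq \vartheta_k\|\sum_{i=1}^m \mu_i^k\nabla F_i(x^k)\| \geq \underline{\vartheta}\,\|\dsd(x^k)\|$, where the last inequality is Lemma~\ref{l:vtheta}(iv) (the vector $\sum \mu_i^k\nabla F_i(x^k)$ lies in the convex hull of the gradients, whose minimal-norm element has norm $\|\dsd(x^k)\|$). So (i) holds with $c_1 = \underline{\vartheta}$, and this part is essentially immediate.

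\textbf{The upper bound in (ii).} Here I would first control $\|\gamma_j^k\|$ from above. Since $\gamma_j^k = y_j^k + r_j^k s^k$, the triangle inequality gives $\|\gamma_j^k\| \leq \|y_j^k\| + r_j^k\|s^k\|$. The term $\|y_j^k\| = \|\nabla F_j(x^{k+1}) - \nabla F_j(x^k)\| \leq L\|s^k\|$ by $L$-Lipschitz continuity of $\nabla F_j$ (both $x^k, x^{k+1}\in\Ls\subset\mathcal N$ since the Wolfe condition \eqref{wolfe} forces $F(x^{k+1})\preceq F(x^k)$). For $r_j^k$, from its definition \eqref{rjk}, $r_j^k = \max\{-\eta_j^k,0\} + \vartheta_k\|\sum_i \mu_i^k\nabla F_i(x^k)\|$; the first summand is at most $\|\eta_j^k\| = |(y_j^k)^\top s^k|/\|s^k\|^2 \leq \|y_j^k\|/\|s^k\| \leq L$, and the second is at most $\bar\vartheta\,\max_{x\in\Ls}\max_i\|\nabla F_i(x)\| =: \bar\vartheta\, G$, a finite constant by continuity of the gradients on the compact set $\Ls$. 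Hence $r_j^k \leq L + \bar\vartheta G$ and $\|\gamma_j^k\| \leq (2L + \bar\vartheta G)\|s^k\| =: M\|s^k\|$. Combining this with the lower bound on $(\gamma_j^k)^\top s^k$ from item (i) — namely $(\gamma_j^k)^\top s^k \geq c_1\|\dsd(x^k)\|\,\|s^k\|^2$ — yields
\[
\frac{\|\gamma_j^k\|^2}{(\gamma_j^k)^\top s^k} \leq \frac{M^2\|s^k\|^2}{c_1\|\dsd(x^k)\|\,\|s^k\|^2} = \frac{M^2/c_1}{\|\dsd(x^k)\|},
\]
so (ii) holds with $c_2 = M^2/c_1$.

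\textbf{Main obstacle.} The only genuinely non-routine point is establishing that all the relevant quantities — the gradient norms $\|\nabla F_i(x^k)\|$, hence $G$, and thereby the constant $M$ — are uniformly bounded over $k$. This is exactly where Assumption~\ref{assumption1}(ii) (boundedness of $\Ls$, hence $\overline{\Ls}$ compact) and the fact that the Wolfe first condition in \eqref{wolfe} keeps $\{x^k\}\subset\Ls$ are used; continuity of $\nabla F_i$ on the compact $\overline{\Ls}$ then furnishes $G < \infty$. Everything else is a direct chain of triangle inequalities, the Cauchy–Schwarz bound $|(y_j^k)^\top s^k|\leq\|y_j^k\|\|s^k\|$, the $L$-Lipschitz estimate on $y_j^k$, and the already-proven item (i). I would also remark that (i) and (ii) combined with Lemma~\ref{l:vtheta} feed directly into Proposition~\ref{l:poa}: along any subsequence on which $\|\dsd(x^k)\|$ is bounded away from zero, the hypotheses of that proposition hold with $C_1 = c_1\inf\|\dsd(x^k)\|$ and $C_2 = c_2/\inf\|\dsd(x^k)\|$.
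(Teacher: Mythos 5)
Your proof is correct and follows essentially the same route as the paper: item (i) via the inequality chain in \eqref{curitiba} with $c_1=\underline{\vartheta}$, and item (ii) via the triangle inequality, the Lipschitz bound $\|y_j^k\|\leq L\|s^k\|$, the Cauchy--Schwarz bound $|\eta_j^k|\leq L$, and compactness of $\Ls$ to bound the gradient term, yielding $c_2=(2L+\bar{\vartheta}G)^2/c_1$ exactly as in the paper (where your $G$ plays the role of the paper's $\bar{c}$).
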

\begin{proof}
Let $k\geq 0$ and $j\in\{1,\ldots,m\}$ be given. As in \eqref{curitiba}, we have
$$\dfrac{(\gamma_j^{k})^\top s^k}{\|s^k\|^2}  \geq \underline{\vartheta} \|\dsd(x^k)\|,  \quad \forall j=1,\ldots,m.$$
Thus, taking $c_1:=\underline{\vartheta}$, we conclude item~{\it(i)}.
Now consider item~{\it(ii)}. 
By the Cauchy--Schwarz inequality and Assumption~\ref{assumption1}{\it(iii)}, it follows that
$$|\eta_j^k|=  \dfrac{|(y_{j}^k)^\top s^k|}{\|s^k\|^2}\leq \dfrac{\|y_{j}^k\|}{\|s^k\|} = \dfrac{\|\nabla F_{j}(x^{k+1})-\nabla F_j(x^k)\|}{\|x^{k+1}-x^{k}\|}\leq L.$$
On the other hand, since $\{x^k\}\subset\Ls$ and $\mu^k\in\Delta_m$, by Assumption~\ref{assumption1}{\it(i)}--{\it(ii)}, there exists a constant $\bar{c}>0$, independent of $k$, such that $\|\sum_{i=1}^m\mu_i^k \nabla F_i(x^k)\|\leq \bar{c}$.
The definition of $r_j^k$ together with the last two inequalities yields 
$$r_j^k\leq |\eta_j^k| + \vartheta_k \|\sum_{i=1}^m\mu_i^k \nabla F_i(x^k)\| \leq L+ \bar{\vartheta}\bar{c},$$
and hence
$$\|\gamma_j^k\| \leq  \|y_j^k\| + r_j^k \|s^k\| = \left(\dfrac{\|y_j^k\|}{\|s^k\|} + r_j^k \right) \|s^k\| \leq (2L+ \bar{\vartheta}\bar{c}) \|s^k\|.$$
By squaring the latter inequality and using item{\it(i)}, we obtain
$$\dfrac{\|\gamma_j^{k}\|^2}{(\gamma_j^{k})^\top s^k}\leq (2L+ \bar{\vartheta}\bar{c})^2 \dfrac{ \|s^k\|^2}{(\gamma_j^{k})^\top s^k}  \leq \dfrac{(2L+ \bar{\vartheta}\bar{c})^2}{c_1 \|\dsd(x^k)\|}.$$
Thus, taking $c_2:=(2L+ \bar{\vartheta}\bar{c})^2/c_1$, we conclude the proof.
\end{proof}

From now on, let $\lambda^k:= \lambda(x^k)\in\R^m$ be the Lagrange multiplier associated with $x^k$  satisfying \eqref{dN}--\eqref{lambdaN}.
We are now able to prove the main result of this section. 
We show that Algorithm~\ref{alg:MBFGS} finds a Pareto critical point of $F$, without imposing any convexity assumptions.

\begin{theorem}\label{bh}
Let $\{x^k\}$ be a sequence generated by Algorithm~\ref{alg:MBFGS}. Then
\begin{equation}\label{saopaulo}
\liminf_{k\to\infty}\|\dsd(x^k)\|=0.
\end{equation}
As a consequence, $\{x^k\}$ has a limit point that is Pareto critical.
\end{theorem}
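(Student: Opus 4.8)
The plan is to argue by contradiction: suppose \eqref{saopaulo} fails, so that there exists $\epsilon>0$ with $\|\dsd(x^k)\|\geq \epsilon$ for all $k$ sufficiently large (say all $k\geq 0$ after reindexing). Under this assumption, Lemma~\ref{l:goiania} gives uniform bounds $(\gamma_j^k)^\top s^k/\|s^k\|^2 \geq c_1\epsilon =: C_1 > 0$ and $\|\gamma_j^k\|^2/(\gamma_j^k)^\top s^k \leq c_2/\epsilon =: C_2$ for all $j$ and all $k$, which are exactly the hypotheses of Proposition~\ref{l:poa}. Hence, fixing some $p\in(0,1)$ (say $p=1/2$), there is $\delta>0$ such that for every $k\geq 1$ the inequality $\cos\beta_j^\ell\geq\delta$ for all $j$ holds on a set of indices $\ell\in\{0,\ldots,k\}$ of cardinality at least $\lceil p(k+1)\rceil$.

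Next I would convert $\cos\beta_j^\ell\geq\delta$ into a lower bound on $|\D(x^\ell,d^\ell)|/\|d^\ell\|$. The key identity is that in the multiobjective quasi-Newton setting the search direction satisfies $-\sum_j\lambda_j^k\nabla F_j(x^k) = (\sum_j\lambda_j^k B_j^k)d^k$, and by \eqref{thetaN}, $\theta(x^k) = -\tfrac12 (d^k)^\top(\sum_j\lambda_j^k B_j^k)d^k$. Since $s^k=\alpha_k d^k$, the angle between $s^k$ and $B_j^k s^k$ equals the angle between $d^k$ and $B_j^k d^k$, and $\cos\beta_j^k\geq\delta$ for all $j$ forces $(d^k)^\top B_j^k d^k \geq \delta\,\|d^k\|\,\|B_j^k d^k\|$. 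Combining this over the convex combination $\lambda^k$, together with $\D(x^k,d^k)\leq\theta(x^k)<0$ from Lemma~\ref{l:pareto}(ii), should yield a bound of the form $|\D(x^k,d^k)|/\|d^k\| \geq \kappa\,\|\dsd(x^k)\| \geq \kappa\epsilon$ on the ``good'' index set, for some constant $\kappa>0$ depending on $\delta$ and the uniform bounds. (Here one also uses that $-\dsd(x^k)$ is a specific element of the convex hull of the gradients, so $\sum_j\lambda_j^k\nabla F_j(x^k)$ — being minimized in norm over the simplex up to the $B_j^k$-weighting — can be related to $\|\dsd(x^k)\|$; this is the one place that needs care.) Then $\D(x^\ell,d^\ell)^2/\|d^\ell\|^2 \geq \kappa^2\epsilon^2$ for infinitely many $\ell$ — in fact for a positive fraction of all $\ell$ — which makes $\sum_k \D(x^k,d^k)^2/\|d^k\|^2$ diverge, contradicting the Zoutendijk condition \eqref{Zout} of Proposition~\ref{prop:zout}. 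This contradiction proves \eqref{saopaulo}.

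For the final sentence, I would use Assumption~\ref{assumption1}(ii): the sequence $\{x^k\}$ lies in the level set $\Ls$ (the Wolfe condition's first inequality \eqref{wolfe} forces $F(x^{k+1})\preceq F(x^k)\preceq F(x^0)$ together with $\D(x^k,d^k)<0$), which is bounded, so $\{x^k\}$ has a convergent subsequence $x^{k_i}\to x^*$. Passing to a further subsequence along which $\|\dsd(x^{k_i})\|\to 0$ (possible by \eqref{saopaulo}) and invoking the continuity of $\dsd(\cdot)$ from Lemma~\ref{l:vtheta}(iii), we get $\dsd(x^*)=0$, hence $x^*$ is Pareto critical by Lemma~\ref{l:vtheta}(i).

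The main obstacle I anticipate is the middle step: passing cleanly from the angle bound $\cos\beta_j^k\geq\delta$ (simultaneously for all $j$) to a lower bound on $|\D(x^k,d^k)|/\|d^k\|$ proportional to $\|\dsd(x^k)\|$. In the scalar case this is immediate because $\cos\beta_1^k$ literally is the cosine of the angle between $d^k$ and $-\nabla F_1(x^k)$; in the multiobjective case one must handle the $\lambda^k$-weighted combination of the $B_j^k$ and correctly relate $\|\sum_j\lambda_j^k\nabla F_j(x^k)\|$ to $\|\dsd(x^k)\|$ via the variational characterization in Lemma~\ref{l:vtheta}(iv). I expect this is handled by the same device used in \cite{Leandro&Danilo}, and the uniform positive-definiteness-type bounds available on the good index set are what make it go through.
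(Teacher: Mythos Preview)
Your proposal is correct and follows essentially the same route as the paper's proof; the step you flagged goes through directly, since \eqref{dN} gives $\sum_j\lambda_j^k B_j^k d^k = -\sum_j\lambda_j^k\nabla F_j(x^k)$ and, as $\lambda^k\in\Delta_m$, Lemma~\ref{l:vtheta}{\it (iv)} immediately yields $\|\sum_j\lambda_j^k\nabla F_j(x^k)\|\geq\|\dsd(x^k)\|$, so no extra positive-definiteness bounds are needed. One minor correction for the final part: first extract (via \eqref{saopaulo}) a subsequence along which $\|\dsd(x^k)\|\to 0$ and \emph{then} apply compactness of $\Ls$ to that subsequence---in your stated order, \eqref{saopaulo} does not guarantee that an arbitrary convergent subsequence admits a further subsequence with $\|\dsd\|\to 0$.
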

\begin{proof}
Assume by contradiction that there is a constant $\varepsilon>0$ such that
\begin{equation}\label{floripa}
\|\dsd(x^k)\| \geq \varepsilon, \quad \forall k\geq 0.
\end{equation}
From Lemma~\ref{l:goiania}, taking $C_1:=c_1\varepsilon$ and $C_2:=c_2/\varepsilon$, we have
$$\dfrac{(\gamma_j^{k})^\top s^k}{\|s^k\|^2}\geq C_1  \quad \mbox{and} \quad \dfrac{\|\gamma_j^{k}\|^2}{(\gamma_j^{k})^\top s^k}\leq C_2,  \quad \forall j=1,\ldots,m, \quad \forall k\geq 0,$$
showing that the assumptions of Proposition~\ref{l:poa} are satisfied. Thus, there exist a constant $\delta>0$ and $\K\subsetinf\N$ such that 
\[\cos\beta_j^{k} \geq \delta, \quad  \forall j = 1,\ldots,m, \quad \forall k\in \K.\] 
Hence, by the definitions of $\cos\beta_j^k$ and $s^k$, we have, for all $j = 1,\ldots,m$,
\[\delta\leq  \cos\beta_j^k = \frac{(s^k)^\top B_j^ks^k}{\|s^k\|\|B_j^ks^k\|}=\frac{(d^k)^\top B_j^kd^k}{\|d^k\|\|B_j^kd^k\|}, \quad \forall k\in \K,\]
which implies
\[(d^k)^\top B_j^kd^k \geq \delta \|d^k\|\|B_j^kd^k\|, \quad  \forall k\in \K.\]
Therefore, from Lemma~\ref{l:pareto}{\it (ii)} and \eqref{thetaN}, it follows that
\[-\D(x^k,d^k)>-\theta(x^k) =  \frac{1}{2} \sum_{j=1}^{m} \lambda_j^k (d^k)^\top B_j^k d^k \geq \frac{\delta}{2}  \|d^k\| \sum_{j=1}^{m} \lambda_j^k \|B_j^kd^k\|, \quad \forall k\in \K.\]
Thus, from the triangle inequality, \eqref{dN}, \eqref{lambdaN}, Lemma~\ref{l:vtheta}{\it (iv)}, and \eqref{floripa}, we obtain
\[-\dfrac{\D(x^k,d^k)}{\|d^k\|} \geq \frac{ \delta}{2}  \|\sum_{j=1}^{m} \lambda_j^k B_j^kd^k\| =  \frac{\delta}{2}  \|\sum_{j=1}^{m} \lambda_j^k \nabla F_j(x^k)\|\geq  \frac{ \delta}{2}  \|\dsd(x^k)\| \geq \frac{ \delta \varepsilon}{2}, \quad \forall k\in \K.\]
Hence, 
\[\sum_{k\geq 0}  \dfrac{\D(x^k,d^k)^2}{\|d^k\|^2}\geq  \sum_{k\in \K} \dfrac{\D(x^k,d^k)^2}{\|d^k\|^2}\geq \sum_{k\in \K} \dfrac{\delta^2\varepsilon^2}{4}=\infty,\]
which contradicts the Zoutendijk condition \eqref{Zout}. Therefore, we conclude that \eqref{saopaulo} holds.

Now, \eqref{saopaulo} implies that there exists $\K_1\subsetinf\N$ such that $\lim_{k\in\K_1}\|\dsd(x^k)\|=0$. On the other hand, given that $\{x^k\}\subset \Ls$ and $\Ls$ is compact, we can establish the existence of $\K_2\subseteq \K_1$ and  $x^{\ast}\in \Ls$ such that $\lim_{k\in \K_2}x^k= x^{\ast}$. Thus, from Lemma~\ref{l:vtheta}{\it (iii)}, we deduce that $\dsd(x^{\ast})=0$. Consequently, based on Lemma~\ref{l:vtheta}{\it (i)}, we conclude that $x^{\ast}$ is Pareto critical.
\end{proof}

Even though the primary focus of this article is on nonconvex problems, we conclude this section by establishing full convergence of the sequence generated by Algorithm~\ref{alg:MBFGS} in the case of strict convexity of $F$. Note that, under Assumption~\ref{assumption1}{\it (i)--(ii)}, the existence of at least one Pareto optimal point is assured in this particular case.

\begin{theorem}
Let $\{x^k\}$ be a sequence generated by Algorithm~\ref{alg:MBFGS}. If $F$ is strictly convex, then $\{x^k\}$ converges to a Pareto optimal point of $F$.
\end{theorem}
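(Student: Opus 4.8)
The plan is to deduce full convergence from three ingredients that are essentially already in place: the componentwise monotonicity of $\{F(x^k)\}$ forced by the Armijo part of \eqref{wolfe}, the existence of a Pareto critical (hence, under strict convexity, Pareto optimal) limit point provided by Theorem~\ref{bh}, and a midpoint argument that uses strict convexity to forbid a second limit point.

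First I would record the elementary facts. Since, by our standing assumption, no $x^k$ is Pareto critical, Lemma~\ref{l:pareto}{\it(ii)} gives $\D(x^k,d^k)<0$, so the first inequality in \eqref{wolfe} yields $F_j(x^{k+1})<F_j(x^k)$ for every $j=1,\ldots,m$. Thus $\{F(x^k)\}$ is componentwise strictly decreasing and $\{x^k\}\subset\Ls$, which is compact by Assumption~\ref{assumption1}{\it(ii)}; in particular $\{x^k\}$ is bounded and, $F$ being continuous, each $\{F_j(x^k)\}$ is bounded below, hence convergent. Write $F^{\ast}:=\lim_{k\to\infty}F(x^k)$.

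Next I would invoke Theorem~\ref{bh} to obtain $\K_2\subsetinf\N$ and $x^{\ast}\in\Ls$ with $\lim_{k\in\K_2}x^k=x^{\ast}$ and $x^{\ast}$ Pareto critical; by continuity $F(x^{\ast})=F^{\ast}$, and since $F$ is strictly convex, Lemma~\ref{Theo3.1-Bennar-Fliege}{\it(iii)} ensures $x^{\ast}$ is Pareto optimal. I would then show $x^{\ast}$ is the \emph{unique} limit point of $\{x^k\}$. If $\bar x$ is any limit point, then along a suitable subsequence $F(x^k)\to F(\bar x)$ by continuity, so $F(\bar x)=F^{\ast}=F(x^{\ast})$. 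Assuming $\bar x\neq x^{\ast}$ and setting $z:=(x^{\ast}+\bar x)/2$, strict convexity of each $F_j$ gives $F_j(z)<\frac12 F_j(x^{\ast})+\frac12 F_j(\bar x)=F_j(x^{\ast})$, i.e.\ $F(z)\prec F(x^{\ast})$, contradicting the Pareto optimality of $x^{\ast}$. Hence every limit point equals $x^{\ast}$. Since $\{x^k\}$ is bounded with a unique limit point, it converges to it — otherwise some subsequence would stay at distance at least $\varepsilon>0$ from $x^{\ast}$ and, being bounded, would have a convergent sub-subsequence whose limit is a limit point different from $x^{\ast}$, a contradiction. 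Therefore $x^k\to x^{\ast}$, a Pareto optimal point of $F$.

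I do not expect a serious obstacle: the analytic content is already in Theorem~\ref{bh} and Lemma~\ref{Theo3.1-Bennar-Fliege}. The only point requiring care is the role of strict (rather than strong) convexity: it is what makes the midpoint $z$ \emph{strictly} dominate $x^{\ast}$ as soon as $\bar x\neq x^{\ast}$, and it is also what upgrades ``Pareto critical'' to ``Pareto optimal'' for the limit point. Note also that one does not need every limit point to be Pareto critical — a single Pareto optimal limit point together with the convergence of $\{F(x^k)\}$ already pins down the entire sequence.
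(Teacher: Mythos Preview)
Your proof is correct and follows essentially the same route as the paper: both combine Theorem~\ref{bh} and Lemma~\ref{Theo3.1-Bennar-Fliege}{\it(iii)} to obtain a Pareto optimal limit point, and both rule out a second limit point via the strict-convexity midpoint argument together with the componentwise monotonicity of $\{F(x^k)\}$ from \eqref{wolfe}. The only cosmetic difference is that you first establish convergence of $\{F(x^k)\}$ to conclude all limit points share the same $F$-value before applying the midpoint argument, whereas the paper uses the midpoint argument to force $F(\bar x)\neq F(x^{\ast})$ and then derives a contradiction to monotonicity.
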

\begin{proof}
According to Theorem~\ref{bh} and Theorem~\ref{Theo3.1-Bennar-Fliege}{\it (iii)}, there exists a limit point $\xs\in\Ls$ of $\{x^k\}$ that is Pareto optimal.
Let $\K_1\subsetinf \N$ be such that  $\lim_{k\in \K_1}x^k=\xs$.
To show the convergence of $\{x^k\}$ to $\xs$, let us suppose by contradiction that there exist $\bar{x}\in\mathcal{L}$, where $\bar{x}\neq x^{\ast}$, and $\K_2\subsetinf \N$ such that $\lim_{k\in \K_2}x^k=\bar{x}$.
We first claim that $F(\bar{x})\neq F(x^{\ast})$. In fact, if $F(\bar{x})= F(x^{\ast})$, based on \eqref{eq:convex}, for all $t\in(0,1)$, we would have
\[F((1-t)x^{\ast}+t\bar{x})\prec (1-t)F(x^{\ast})+tF(\bar{x})=F(x^{\ast}),\]
which contradicts the fact that $x^{\ast}$ is a Pareto optimal point. Hence, $F(\bar{x})\neq F(x^{\ast})$, as we claimed.
Now, since $x^{\ast}$ is Pareto optimal, there exists $j_\ast\in\{1,\ldots,m\}$ such that $F_{j_\ast}(x^{\ast})<F_{j_\ast}(\bar{x})$. Therefore, considering that $\lim_{k\in \K_1}x^k=x^{\ast}$ and $\lim_{k\in \K_2}x^k=\bar{x}$, we can choose $k_1\in \K_1$ and $k_2\in \K_2$ such that $k_1<k_2$ and $F_{j_\ast}(x^{k_1})<F_{j_\ast}(x^{k_2})$. This contradicts \eqref{wolfe1} which implies, in particular, that $\{F_{j_\ast}(x^k)\}$ is decreasing. Thus, we can conclude that $\lim_{k\to \infty}x^k=x^{\ast}$, completing the proof.
\end{proof}

\section{Local convergence analysis} \label{sec:superlinear}

In this section, we analyze the local convergence properties of Algorithm~\ref{alg:MBFGS}. 
The findings presented here are applicable to both convex and nonconvex problems. 
We will assume that the sequence $\{x^k\}$ converges to a local Pareto optimal point $x^{\ast}$ and show, under appropriate assumptions, that the convergence rate is superlinear.
\subsection{Superlinear rate of convergence}
Throughout this section, we make the following assumptions.

\begin{assumption}\label{assumption2}
\textit{(i)} $F$ is twice continuously differentiable.\\
\textit{(ii)}  The sequence $\{x^k\}$ generated by Algorithm~\ref{alg:MBFGS} converges to a local Pareto optimal point $x^{\ast}$ where $\HF_j(\xs)$ is positive definite for all $j=1,\ldots,m$.\\
\textit{(iii)} For each $j=1,\ldots,m$, $\HF_j(x)$ is Hölder continuous at $\xs$, i.e., there exist constants $\nu\in(0,1]$ and $M>0$ such that
 \begin{equation}\label{lip}
	\|\HF_j(x)-\HF_j(\xs)\|\leq M \|x-\xs\|^{\nu}, \quad \forall j=1,\ldots,m,
\end{equation} 
for all $x$ in a neighborhood of $\xs$.
\end{assumption}

Under Assumption~\ref{assumption2}{\it (ii)}, there exist a neighborhood $U$ of $\xs$ and constants $\underline{L}>0$ and $L>0$ such that
 \begin{equation}\label{sconvex}
	 \underline{L}\|z\|^2 \leq z^\top \HF_j(x)z\leq L\|z\|^2, \quad \forall j=1,\ldots,m,
\end{equation} 
for all $z \in  \mathbb{R}^n$ and $x \in U$. 
In particular, \eqref{sconvex} implies that $F_j$ is strongly convex and has Lipschitz continuous gradients on $U$.
Note that constant $L$ in \eqref{sconvex} aligns with the $L$ defined in Assumption~\ref{assumption1}{\it (iii)} as part of the $L$-Lipschitz continuity condition for $\nabla F_j$, maintaining consistent notation for the Lipschitz constant across our analysis. Throughout this section, we assume, without loss of generality, that $\{x^k\}\subset U$ and that Assumption~\ref{assumption2}{\it (iii)} holds in $U$, i.e., \eqref{lip}  and \eqref{sconvex} hold at $x^k$ for all $k\geq 0$.

We also introduce the following additional assumption about $\{r_j^k\}$, which will be considered only when explicitly mentioned. In Section~\ref{sec:rjk}, we will explore practical choices for $\{r_j^k\}$ that satisfy such an assumption.

\begin{assumption}\label{assumption3}
For each $j=1,\ldots,m$, $\{r_j^k\}$ satisfies $\sum_{k \geq 0} r_j^k < \infty.$
\end{assumption}

The following result, which is related to the linear convergence of the sequence $\{x^k\}$ and is based on \cite[Theorem~4.6]{Leandro&Danilo}, has its proof in the Appendix.

\begin{proposition}\label{prop:linear}
Suppose that Assumption~\ref{assumption2}{\it (i)--(ii)} holds. Let $\{x^k\}$ be a sequence generated by Algorithm~\ref{alg:MBFGS}. Then, for all $\nu>0$, we have
\begin{equation}\label{argentina}
\sum_{k\geq 0}\|x^k-x^{\ast}\|^{\nu} < \infty.
\end{equation}
\end{proposition}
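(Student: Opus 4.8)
The plan is to establish the $R$-linear convergence of $\{x^k\}$ to $x^\ast$ first, and then sum the resulting geometric-type bound. The starting point is the global result already proved: by Theorem~\ref{bh} the method produces iterates whose objective values are monotonically decreasing (first Wolfe condition), and by Assumption~\ref{assumption2}{\it(ii)} the whole sequence converges to $x^\ast$. Since $\{x^k\}\subset U$ eventually, each $F_j$ is strongly convex with Lipschitz gradient on $U$ by \eqref{sconvex}. The key mechanism is to show that one full step of the method reduces the ``merit'' of the iterate by a fixed fraction along an infinite subsequence, and in fact (using the Zoutendijk condition and strong convexity) that $\|d_{SD}(x^k)\|$ controls the distance $\|x^k-x^\ast\|$ up to constants, because on $U$ one has $\underline{L}\|x^k-x^\ast\|\lesssim\|d_{SD}(x^k)\|\lesssim L\|x^k-x^\ast\|$ (this is the standard equivalence of the stationarity residual and the distance to the minimizer for strongly convex problems, applied to the scalarized steepest-descent measure; Lemma~\ref{l:vtheta}{\it(iv)--(v)} gives the Lipschitz side, and strong convexity gives the lower bound).

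Concretely, I would follow the template of \cite[Theorem~3.2]{Leandro&Danilo} (or the scalar argument in \cite{Tool}): apply Proposition~\ref{l:poa} to get that $\cos\beta_j^\ell\geq\delta$ simultaneously for all $j$ on a fraction $p$ of indices — here the hypotheses of Proposition~\ref{l:poa} hold automatically on $U$ because $\|d_{SD}(x^k)\|$ stays bounded away from... no, it tends to zero, so one instead uses Lemma~\ref{l:goiania} together with the distance equivalence to convert the bounds into the form required, noting that the possibly-vanishing factor $\|d_{SD}(x^k)\|$ is harmless because it appears symmetrically and the final estimate is in terms of $\|x^k-x^\ast\|$. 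On those ``good'' indices, the angle condition plus strong convexity gives a contraction: $\|x^{k+1}-x^\ast\|\leq c\,\|x^k-x^\ast\|$ for some $c<1$ (combining the sufficient-decrease Wolfe condition \eqref{wolfe}, the lower bound $-\theta(x^k)\geq(\delta/2)\|d^k\|\|d_{SD}(x^k)\|$ from the proof of Theorem~\ref{bh}, and $F_j(x^{k+1})-F_j(x^\ast)$ bounded above and below by multiples of $\|x^k-x^\ast\|^2$ and $\|x^{k+1}-x^\ast\|^2$). Since the good indices form a positive-density subset and $F_j(x^k)$ is nonincreasing for every $j$, the bad steps cannot undo the progress, yielding $\|x^k-x^\ast\|\leq C\,\hat{c}^{\,k}$ for some $\hat{c}\in(0,1)$, i.e. $R$-linear convergence.

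Given $R$-linear convergence, the conclusion \eqref{argentina} is immediate: $\sum_{k\geq0}\|x^k-x^\ast\|^\nu\leq C^\nu\sum_{k\geq0}\hat{c}^{\,\nu k}<\infty$ since $\hat{c}^{\,\nu}<1$. I expect the main obstacle to be the bookkeeping in the ``good fraction of indices'' argument: one must carefully track that a geometric decrease occurring only on a density-$p$ subsequence, combined with monotone (but not necessarily contracting) behaviour on the complementary indices, still forces the full sequence to converge $R$-linearly — this is exactly the delicate point in \cite{Tool} and \cite{Leandro&Danilo}, and here it is complicated slightly by having $m$ objectives whose angles must be controlled simultaneously and by the extra correction terms $r_j^k$ in the update \eqref{mBFGS}, which must be shown not to spoil the constants (they are bounded on $U$ by the estimate $r_j^k\leq L+\bar\vartheta\bar c$ already derived in the proof of Lemma~\ref{l:goiania}).
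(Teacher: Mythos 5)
Your overall strategy is the right one and is, in substance, the argument behind the proof the paper actually gives, which is simply a citation to \cite[Theorem~4.6]{Leandro&Danilo}: use the Byrd--Nocedal good-iterates device \cite{Tool} to obtain a fixed fractional decrease of the locally strongly convex objective values on a positive-density set of iterations, deduce $R$-linear convergence of $\|x^k-x^\ast\|$, and sum the geometric series. Your final step, $\sum_{k\geq0}\|x^k-x^\ast\|^\nu\leq C^\nu\sum_{k\geq0}\hat{c}^{\,\nu k}<\infty$, is correct as stated.

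There is, however, one concrete gap in the middle of your plan: the verification of the hypotheses of Proposition~\ref{l:poa}. Lemma~\ref{l:goiania} only gives $(\gamma_j^{k})^\top s^k/\|s^k\|^2\geq c_1\|\dsd(x^k)\|$ and $\|\gamma_j^{k}\|^2/((\gamma_j^{k})^\top s^k)\leq c_2/\|\dsd(x^k)\|$; since $\|\dsd(x^k)\|\to 0$ as $x^k\to x^\ast$, the lower bound vanishes and the upper bound blows up, so these estimates cannot be ``converted'' into the uniform constants $C_1,C_2$ that Proposition~\ref{l:poa} requires, and the remark that the vanishing factor ``appears symmetrically and is harmless'' is not an argument --- the angle bound $\delta$ produced by Proposition~\ref{l:poa} genuinely degenerates as $C_1\to0$ or $C_2\to\infty$. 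The correct repair, which is exactly what the paper's local analysis does, is to drop Lemma~\ref{l:goiania} in the local regime and use \eqref{sconvex} directly: by the mean value theorem, $(y_j^k)^\top s^k\geq\underline{L}\|s^k\|^2$ and $\|y_j^k\|\leq L\|s^k\|$ on $U$, whence $(\gamma_j^{k})^\top s^k/\|s^k\|^2\geq\underline{L}+r_j^k\geq\underline{L}$ (this is \eqref{joaopessoa}) and, since $0\leq r_j^k\leq L+\bar{\vartheta}\bar{c}$, also $\|\gamma_j^{k}\|^2/((\gamma_j^{k})^\top s^k)\leq(2L+\bar{\vartheta}\bar{c})^2/\underline{L}$; these constants are uniform in $k$ and $j$, so Proposition~\ref{l:poa} applies. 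A second, more minor, imprecision: the good-iterates argument does not yield the $Q$-linear contraction $\|x^{k+1}-x^\ast\|\leq c\|x^k-x^\ast\|$ you write down; it yields a contraction of the function-value gaps on the good indices, combined with monotonicity of $\{F_j(x^k)\}$ from \eqref{wolfe} on all indices, from which only the $R$-linear bound $\|x^k-x^\ast\|\leq C\hat{c}^{\,k}$ follows via strong convexity. Since that weaker conclusion is all you use, the final summation is unaffected.
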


As usual in quasi-Newton methods, our analysis relies on the Dennis--Mor\'e \cite{DennisMore} characterization of superlinear convergence.
To accomplish this, we use a set of tools developed in \cite{Tool} (see also \cite{powell1976some}). For every $k\geq 0$, we define the {\it average Hessian} as:
	\[ \bar{G}_j^k \coloneqq \int_{0}^{1} \HF_j(x^k + \tau s^k)d\tau,\quad \forall j = 1,\dots,m.\]
	This leads to the relationship:
	\begin{equation}\label{uruguai}
		\bar{G}_j^ks^k = y_j^k, \quad \forall j = 1,\dots,m.
	\end{equation}
We also introduce, for each $j=1,\ldots,m$ and $k\geq 0$, the following quantities:
\[\ts := \nabla^2F_j(x^{*})^{1/2}s^k, \quad \ty  := \nabla^2F_j(x^{*})^{-1/2} y_j^{k}, \quad \tg:= \nabla^2F_j(x^{*})^{-1/2} \gamma_j^k,\]
and
\[\tB := \nabla^2F_j(x^{*})^{-1/2} B_j^{k}\nabla^2F_j(x^{*})^{-1/2}.\]
Note that
\[\tilde{B}_j^{k+1} = \tB- \dfrac{\tB\ts(\ts)^\top\tB}{(\ts)^\top\tB\ts} + \dfrac{\tg(\tg)^\top}{(\tg)^\top\ts}, \quad \forall j=1,\ldots,m,\]
and
\begin{equation}\label{joaopessoa}
\dfrac{(\tg)^\top\ts}{\|s^k\|^2}=\dfrac{(\gamma_j^k)^\top s^k}{\|s^k\|^2}=\dfrac{(y_j^k)^\top s^k}{\|s^k\|^2}+r_j^k= \dfrac{(s^k)^\top \bar{G}_j^ks^k}{\|s^k\|^2}+r_j^k\geq \underline{L}+r_j^k\geq \underline{L} , \quad \forall j=1,\ldots,m,
\end{equation}
  where the first inequality follows from the left hand side of \eqref{sconvex}. Considering that $\tB \succ 0$ and, based on \eqref{joaopessoa}, it follows that $(\tg)^\top\ts>0$, we can follow the same arguments as in the proof of Theorem~\ref{teo:well} to show that $\tilde{B}_j^{k+1}\succ 0$ for all $j=1,\ldots,m$ and all $k\geq 0$.
In connection with Proposition~\ref{l:poa} and Lemma~\ref{l:goiania}, we additionally define the following quantities:
\[\ta:= \dfrac{(\tg)^\top\ts}{\|\ts\|^2}, \quad \tb:= \dfrac{\|\tg\|^2}{(\tg)^\top\ts}, \quad \cos \tilde{\beta}_j^k  := \dfrac{(\ts)^\top\tB\ts}{\|\ts\|\|\tB \ts\|}, \quad \mbox{and} \quad \tq := \dfrac{(\ts)^\top\tB\ts}{\|\ts\|^2}.\]

Another useful tool combines the trace and the determinant of a given positive definite matrix $B$, through the following function:
\begin{equation}\label{trdet}
\psi(B):=\tr(B)-\ln(\det(B)).
\end{equation}
Given that, for all $j = 1,\ldots,m$,
$$\tr(\tilde{B}_j^{k+1})=\tr(\tB)-\dfrac{\|\tB\ts\|^2}{(\ts)^\top\tB\ts}+\dfrac{\|\tg\|^2 }{(\tg)^\top\ts}= \tr(\tB)-\dfrac{\tq}{\cos^2\tilde{\beta}_j^k}+\tb$$
and
$$\det(\tilde{B}_j^{k+1})=\det(\tB)\dfrac{(\tg)^\top\ts}{(\ts)^\top\tB\ts}=\det(\tB)\dfrac{\ta}{\tq},$$
we can perform some algebraic manipulations to obtain:
 \begin{equation}\label{fortaleza}
 \psi(\tilde{B}_j^{k+1}) = \psi(\tB) +\left(\tb - \ln(\ta) -1\right) +\left[1 - \dfrac{\tq}{\cos^2\tilde{\beta}_j^k} + \ln\bigg(\dfrac{\tq}{\cos^2\tilde{\beta}_j^k}\bigg)\right]+ \ln(\cos^2\tilde{\beta}_j^k).
   \end{equation}

We are now ready to prove the central result of the superlinear convergence analysis: We establish that the Dennis--Mor\'e condition  holds individually for each objective function $F_j$.
A similar result in the scalar case was given in \cite[Theorem~3.8]{Li-Fu-MBFGS}.

\begin{theorem} \label{teo:saoluis}
Suppose that Assumptions~\ref{assumption2} and \ref{assumption3} hold. Let $\{x^k\}$ be a sequence generated by Algorithm~\ref{alg:MBFGS}. Then, for each $j=1,\ldots,m$, we have
\begin{equation}\label{limq}
	\lim_{k\to\infty} \dfrac{(s^k)^\top B_j^ks^k}{(s^k)^\top \HF_j(\xs)s^k} = 1,
\end{equation}
and
\begin{equation}\label{aracaju}
 \lim_{k \to\infty}\dfrac{\| (B_j^k - \nabla ^2F_j(\xs))d^k\|}{\|d^k\|} = 0.
  \end{equation}
\end{theorem}
\begin{proof}
 Let $j\in\{1,\ldots, m\}$ be an arbitrary index. From \eqref{uruguai}, we obtain
$$y_j^k-\nabla^2F_j(x^{*})s^k=[\bar{G}_j^k-\nabla^2F_j(x^{*})]s^k,$$
and hence
$$\ty-\ts=\nabla^2F_j(x^{*})^{-1/2}[\bar{G}_j^k-\nabla^2F_j(x^{*})]\nabla^2F_j(x^{*})^{-1/2}\ts,$$
for all $k\geq 0$. Therefore, by the definition of $\bar{G}_j^k$ and \eqref{lip}, we obtain
\begin{align}
\|\ty-\ts\|\leq & \; \|\nabla^2F_j(x^{*})^{-1/2}\|^2 \|\ts\|  \|\bar{G}_j^k-\nabla^2F_j(x^{*})\|  \nonumber \\
   \leq  & \; M  \|\nabla^2F_j(x^{*})^{-1/2}\|^2  \|\ts\|   \int_{0}^{1}\| x^k + \tau s^k - x^{*}\|^\nu d\tau \leq  \bar{c}_j \varepsilon_k \|\ts\|,  \quad \forall k\geq 0,\label{brasilia}
\end{align}
where $\bar{c}_j:=M \|\nabla^2F_j(x^{*})^{-1/2}\|^2$ and $\varepsilon_k:=\max\{\|x^{k+1}-\xs\|^{\nu},\|x^{k}-\xs\|^{\nu}\}$.
Now, since $|\|\ty\|-\|\ts\||\leq\|\ty-\ts\|$, it follows from \eqref{brasilia} that
 \begin{equation}\label{vitoria}
 (1-\bar{c}_j\varepsilon_k)\|\ts\|\leq \|\ty\| \leq  (1+\bar{c}_j\varepsilon_k)\|\ts\|.
 \end{equation}
Without loss of generality, let us assume that $1-\bar{c}_j\varepsilon_k>0$, for all $k\geq 0$.
Therefore, the left hand side of \eqref{vitoria} together with \eqref{brasilia} yields
$$ (1-\bar{c}_j\varepsilon_k)^2\|\ts\|^2-2(\ty)^\top\ts+\|\ts\|^2 \leq \|\ty\|^2 -2(\ty)^\top\ts+\|\ts\|^2 \leq \bar{c}_j^2 \varepsilon_k^2 \|\ts\|^2,$$
so that
 \begin{equation}\label{maceio} 
 2(\ty)^\top\ts\geq (1-\bar{c}_j\varepsilon_k)^2\|\ts\|^2+\|\ts\|^2-\bar{c}_j^2 \varepsilon_k^2 \|\ts\|^2=2(1-\bar{c}_j\varepsilon_k) \|\ts\|^2.
  \end{equation}
By the definition of $\ta$, we have
\[\ta=\dfrac{(\ty+r_j^k \nabla^2F_j(x^{*})^{-1}\ts)^\top\ts}{\|\ts\|^2}=\dfrac{(\ty)^\top\ts}{\|\ts\|^2}+r_j^k\dfrac{(\ts)^\top\nabla^2F_j(x^{*})^{-1}\ts}{\|\ts\|^2}.\]
 Thus, by \eqref{sconvex} and \eqref{maceio}, we obtain
 \begin{equation}\label{salvador} 
\ta\geq 1-\bar{c}_j\varepsilon_k + \dfrac{r_j^k}{L}\geq 1-\bar{c}_j\varepsilon_k.
  \end{equation}
From the definition of $\tb$, \eqref{joaopessoa}, the right hand side of \eqref{vitoria}, and \eqref{salvador}, by performing some manipulations, we also obtain 
  \begin{align}
 \tb \; = \;&  \dfrac{\|\ty\|^2}{\ta\|\ts\|^2}+2r_j^k\dfrac{(\ty)^\top\nabla^2F_j(x^{*})^{-1}\ts}{(\tg)^\top\ts}+(r_j^k)^2\dfrac{(\ts)^\top\nabla^2F_j(x^{*})^{-2}\ts}{(\tg)^\top\ts} \nonumber \\
  \leq & \; \dfrac{(1+\bar{c}_j\varepsilon_k)^2}{1-\bar{c}_j\varepsilon_k} + 2r_j^k\dfrac{\|\nabla^2F_j(x^{*})^{-1/2}\|\|y_j^{k}\| \|\nabla^2F_j(x^{*})^{-1}\| \|\nabla^2F_j(x^{*})^{1/2}\| \|s^k\|}{\underline{L}\|s^k\|^2} \nonumber\\
  & \; +  (r_j^k)^2\dfrac{\|\nabla^2F_j(x^{*})^{-2}\| \|\ts\|^2}{\underline{L}\|s^k\|^2}  \nonumber \\
    \leq & \; 1+ \dfrac{3\bar{c}_j+\bar{c}_j^2\varepsilon_k}{1-\bar{c}_j\varepsilon_k}\varepsilon_k
    + \dfrac{2LC_1}{\underline{L}}r_j^k +  \dfrac{C_2}{\underline{L}}(r_j^k)^2, \label{teresina}
 \end{align}
 where $C_1:=\|\nabla^2F_j(x^{*})^{-1/2}\| \|\nabla^2F_j(x^{*})^{-1}\| \|\nabla^2F_j(x^{*})^{1/2}\|$ and $C_2:=\|\nabla^2F_j(x^{*})^{-2}\| \|\nabla^2F_j(x^{*})\|$. Assumptions~\ref{assumption2}{\it (ii)} and \ref{assumption3} imply that $\varepsilon_k \to 0$ and $r_j^k\to 0$, respectively. Thus, by using \eqref{salvador} and \eqref{teresina}, for all sufficiently large $k$, we have $\bar{c}_j\varepsilon_k <1/2$, $(r_j^k)^2\leq r_j^k$, and there exists a constant $C>\max\{3\bar{c}_j,(2LC_1+C_2)/\underline{L}\}$ such that
\[\ln(\ta)\geq \ln(1-\bar{c}_j\varepsilon_k)\geq -\dfrac{\bar{c}_j\varepsilon_k}{1-\bar{c}_j\varepsilon_k}\geq -2\bar{c}_j\varepsilon_k>-2C\varepsilon_k,\]
where the second inequality follows from Lemma~\ref{l:natal}~{\it (ii)} (with $\bar{t}=\bar{c}_j\varepsilon_k$), and 
\[\tb\leq 1+ C\varepsilon_k + C r_j^k.\]
 Let $k_0\in\N$ be such that the latter two inequalities hold for all $k\geq k_0$.
 Therefore, by \eqref{fortaleza}, we obtain
 \[\ln\Big(\dfrac{1}{\cos^2\tilde{\beta}_j^k}\Big)- \left[1 - \dfrac{\tq}{\cos^2\tilde{\beta}_j^k} + \ln\bigg(\dfrac{\tq}{\cos^2\tilde{\beta}_j^k}\bigg)\right]<  \psi(\tB)-\psi(\tilde{B}_j^{k+1}) + 3C\varepsilon_k + C r_j^k,\]
  for all $k\geq k_0$. By summing this expression and making use of \eqref{argentina} and Assumption~\ref{assumption3}, we have   
 \[\sum_{\ell\geq k_0} \left\{\ln\Big(\dfrac{1}{\cos^2\tilde{\beta}_j^\ell}\Big)- \left[1 - \dfrac{\tilde{q}_j^\ell}{\cos^2\tilde{\beta}_j^\ell} + \ln\bigg(\dfrac{\tilde{q}_j^\ell}{\cos^2\tilde{\beta}_j^\ell}\bigg)\right]\right\}\leq  \psi(\tilde{B}_j^{k_0}) + 3C\sum_{\ell\geq k_0} \varepsilon_k + C \sum_{\ell\geq k_0} r_j^\ell<\infty.\]
Since $\ln(1/\cos^2\tilde{\beta}_j^\ell)>0$ for all $\ell\geq k_0$ and, by Lemma~\ref{l:natal}~{\it (i)}, the term in the square brackets is nonpositive, we have
\[\lim_{\ell\to\infty}\ln\Big(\dfrac{1}{\cos^2\tilde{\beta}_j^\ell}\Big)=0 \quad \mbox{and} \quad\lim_{\ell\to\infty} \left[1 - \dfrac{\tilde{q}_j^\ell}{\cos^2\tilde{\beta}_j^\ell} + \ln\bigg(\dfrac{\tilde{q}_j^\ell}{\cos^2\tilde{\beta}_j^\ell}\bigg)\right] = 0,\]
and hence
 \begin{equation}\label{palmas} 
 \lim_{\ell\to\infty} \cos^2\tilde{\beta}_j^\ell =1 \quad \mbox{and} \quad\lim_{\ell\to\infty} \tilde{q}_j^\ell = 1.
   \end{equation}
Note that the second limit in \eqref{palmas} is equivalent to \eqref{limq}. Now, it follows that
\begin{align*}
\lim_{k\to\infty}\dfrac{\|\nabla^2F_j(x^{*})^{-1/2}(B_j^k - \nabla^2F_j(x^{*}))s^k\|^2}{\|\nabla^2F_j(x^{*})^{1/2}s^k\|^2} 
& = \lim_{k\to\infty} \dfrac{\|(\tB - I_n)\ts\|^2}{\|\ts\|^2} \\
& = \lim_{k\to\infty}\dfrac{\|\tB\ts\|^2 - 2(\ts)^\top\tB\ts + \|\ts\|^2}{\|\ts\|^2} \\
& = \lim_{k\to\infty}\left[\dfrac{(\tq)^2}{\cos^2\tilde{\beta}_j^k} - 2\tq + 1 \right] =0,
\end{align*} 
where the last equality follows from \eqref{palmas}.
The above limit trivially implies \eqref{aracaju}, concluding the proof.
\end{proof}

Based on the Dennis--Mor\'e characterization established in Theorem~\ref{teo:saoluis}, we can easily replicate the proofs presented in \cite[Theorem~5.5]{Leandro&Danilo} and \cite[Theorem~5.7]{Leandro&Danilo} to show that unit step size eventually satisfies the Wolfe conditions~\eqref{wolfe1}--\eqref{wolfe2} and the rate of convergence is superlinear, as detailed in the Appendix. We formally state the results as follows.

\begin{theorem}\label{teo:convergence}
Suppose that Assumptions~\ref{assumption2} and \ref{assumption3} hold. Let $\{x^k\}$ be a sequence generated by Algorithm~\ref{alg:MBFGS}. Then, the step size $\alpha_k =1$ is admissible for all sufficiently large $k$ and $\{x^k\}$ converges to $\xs$ at a superlinear rate.
\end{theorem}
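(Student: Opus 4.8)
The plan is to use the Dennis--Mor\'e-type condition \eqref{aracaju} of Theorem~\ref{teo:saoluis} as the driving ingredient and then adapt, essentially verbatim, the arguments of \cite{Leandro&Danilo} (which in turn mirror the scalar treatment in \cite{powell1976some,Li-Fu-MBFGS}). The proof splits into two stages: (1) showing that the trial step $\alpha_k=1$ is accepted for all large $k$; and (2) deriving the superlinear rate from the resulting pure-step recursion $x^{k+1}=x^k+d^k$.

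For stage (1), I would fix $j$ and expand $F_j$ to second order at $x^k$; by Assumption~\ref{assumption2}{\it(i)}, the H\"older bound \eqref{lip}, and $x^k\to\xs$, this gives $F_j(x^k+d^k)-F_j(x^k)=\nabla F_j(x^k)^\top d^k+\tfrac12(d^k)^\top\HF_j(\xs)d^k+o(\|d^k\|^2)$. The constraint defining $d^k$ in \eqref{newtonproblem} is $\nabla F_j(x^k)^\top d^k+\tfrac12(d^k)^\top B_j^kd^k\le\theta(x^k)$, and \eqref{aracaju} with Cauchy--Schwarz gives $(d^k)^\top(\HF_j(\xs)-B_j^k)d^k=o(\|d^k\|^2)$, so $F_j(x^k+d^k)-F_j(x^k)\le\theta(x^k)+o(\|d^k\|^2)$. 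On the other hand, \eqref{thetaN} together with \eqref{aracaju} and the left inequality in \eqref{sconvex} yields $-\theta(x^k)\ge c\|d^k\|^2$ for some $c>0$ and all large $k$, while $\D(x^k,d^k)\ge\sum_{j=1}^m\lambda_j^k\nabla F_j(x^k)^\top d^k=2\theta(x^k)$ by \eqref{dN}--\eqref{thetaN} (and $\D(x^k,d^k)\le\theta(x^k)<0$ by Lemma~\ref{l:pareto}{\it(ii)}). Since $\rho<1/2$, combining these facts yields $F_j(x^k+d^k)-F_j(x^k)\le\rho\,\D(x^k,d^k)$ for all large $k$, which is the first inequality in \eqref{wolfe}. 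For the curvature condition, a first-order expansion of each $\nabla F_j$ together with \eqref{dN} and \eqref{aracaju} gives $\D(x^k+d^k,d^k)\ge\sum_{j=1}^m\lambda_j^k\nabla F_j(x^k+d^k)^\top d^k=o(\|d^k\|^2)$; combined with $\D(x^k,d^k)\le\theta(x^k)\le-c\|d^k\|^2$ and $\sigma>0$, this gives $\D(x^k+d^k,d^k)\ge\sigma\,\D(x^k,d^k)$ for all large $k$. Because Step~3 tries $\alpha_k=1$ first, we conclude $x^{k+1}=x^k+d^k$ eventually.

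For stage (2), the key new consequence of \eqref{aracaju} is that the post-step criticality measure decays faster than $\|d^k\|$: from \eqref{dN} at iteration $k$ we have $\sum_{j=1}^m\lambda_j^kB_j^kd^k=-\sum_{j=1}^m\lambda_j^k\nabla F_j(x^k)$, hence, expanding each $\nabla F_j$ to first order and using \eqref{aracaju} and $\lambda^k\in\Delta_m$,
\[\sum_{j=1}^m\lambda_j^k\nabla F_j(x^{k+1})=\sum_{j=1}^m\lambda_j^k\big(\HF_j(\xs)-B_j^k\big)d^k+o(\|d^k\|)=o(\|d^k\|),\]
so that, by Lemma~\ref{l:vtheta}{\it(iv)}, $\|\dsd(x^{k+1})\|\le\big\|\sum_{j=1}^m\lambda_j^k\nabla F_j(x^{k+1})\big\|=o(\|d^k\|)$. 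Feeding this into the linear-convergence analysis behind Proposition~\ref{prop:linear}, which under \eqref{sconvex} relates $\|x^k-\xs\|$, $\|d^k\|$ and $\|\dsd(x^k)\|$ up to constants near $\xs$, one obtains $\|x^{k+1}-\xs\|=o(\|x^k-\xs\|)$, i.e. superlinear convergence; this is precisely the bootstrapping carried out in \cite{Leandro&Danilo}.

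The main obstacle, compared with the scalar BFGS analysis, is that $d^k$ solves a min--max subproblem whose KKT multipliers $\lambda^k$ vary with $k$ (and need not converge when the Pareto multiplier at $\xs$ is non-unique): every estimate must be made uniformly in $j$, the convex combination $\sum_j\lambda_j^k(B_j^k-\HF_j(\xs))d^k$ has to be squeezed through \eqref{aracaju}, and one must cope with the fact that $\D(x^k,d^k)$ is only trapped between $2\theta(x^k)$ and $\theta(x^k)$ rather than equal to a single quantity. None of this is conceptually new --- it is a careful but routine adaptation of \cite{Leandro&Danilo} (cf. \cite[Theorem~3.8]{Li-Fu-MBFGS} for $m=1$) --- so the essential inputs remain Theorem~\ref{teo:saoluis}, Proposition~\ref{prop:linear}, and the strong-convexity bounds \eqref{sconvex}.
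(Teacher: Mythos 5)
Your proposal is correct and follows essentially the same route as the paper, which simply defers to \cite[Theorems~5.5 and 5.7]{Leandro&Danilo}: take the Dennis--Mor\'e condition \eqref{aracaju} from Theorem~\ref{teo:saoluis}, combine it with second-order expansions, \eqref{dN}--\eqref{thetaN}, \eqref{sconvex}, and $\rho<1/2$ to show that $\alpha_k=1$ satisfies \eqref{wolfe} eventually, and then bootstrap $\|\dsd(x^{k+1})\|=o(\|d^k\|)$ via Lemma~\ref{l:vtheta}{\it(iv)} into the superlinear rate. Your use of direct Taylor expansions at $\xs$ even sidesteps the mean-value-theorem slip in \cite[Theorem~5.5]{Leandro&Danilo} that the paper's footnote corrects.
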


\subsection{Suitable choices for $r_j^k$} \label{sec:rjk}

As we have seen, Algorithm~\ref{alg:MBFGS} is globally convergent regardless of the particular choice of $r_j^k$. On the other hand, the superlinear convergence rate depends on whether $r_j^k$ satisfies Assumption~\ref{assumption3}. Next, we explore suitable choices for the multiplier $\mu^k\in\Delta_m$ in \eqref{rjk} to ensure that $r_j^k$ satisfies the aforementioned assumption. 
In what follows, we will assume that Assumption~\ref{assumption2} holds. First note that, as in \eqref{joaopessoa}, we have $\eta_j^k = (y_{j}^k)^\top s^k/\|s^k\|^2\geq \underline{L}>0$ and hence
\[r_j^k =\max \{-\eta_j^k, 0\} + \vartheta_k\|\sum_{i=1}^m\mu_i^k \nabla F_i(x^k)\| =  \vartheta_k\|\sum_{i=1}^m\mu_i^k \nabla F_i(x^k)\|,  \quad \forall j=1,\ldots,m, \quad \forall k\geq 0.\]

\noindent {\bf Choice 1}: One natural choice is to set $\mu^k:=\lambda^{SD}(x^k)\in\Delta_m$ for all $k\geq 0$, where $\lambda^{SD}(x^k)$ is the steepest descent Lagrange multiplier associated with $x^k$ as in \eqref{dsd}. In this case, by Lemma~\ref{l:vtheta}{\it (i)} and Lemma~\ref{l:vtheta}{\it (v)}, we have
\[r_j^k = \vartheta_k\|\dsd(x^k)\| = \vartheta_k (\|\dsd(x^k)\| - \|\dsd(\xs)\|)\leq \bar{\vartheta} L \|x^k - \xs \|,  \quad \forall j=1,\ldots,m, \quad \forall k\geq 0.\]
By summing this expression and making use of \eqref{argentina}, we conclude that $r_j^k$ satisfies Assumption~\ref{assumption3} for all $j=1,\ldots,m$. One potential drawback of this approach is the need to compute the multipliers $\lambda^{SD}(x^k)$, which involves solving the subproblem in \eqref{directionsd}.

\vspace{8pt}
\noindent {\bf Choice 2}:  Another natural choice is to set $\mu^k:=\lambda^k \in\Delta_m$ for all $k\geq 0$, where $\lambda^k$ is the Lagrange multiplier corresponding to the search direction $d^k$, see \eqref{dN}.
Since the subproblem in Step~2 is typically solved in the form of \eqref{newtonproblem} using a primal-dual algorithm, this approach does not require any additional computational cost.
Let us assume that the sequences $\{B_j^k\}$ and $\{(B_j^k)^{-1}\}$ are bounded for all $j=1,\ldots,m$.
In this case, using \cite[Lemma~6]{gonccalves2021globally}, there exists a constant $\delta>0$ such that
$\|\sum_{i=1}^m\lambda_i^k \nabla F_i(x^k)\|\leq \delta \|\dsd(x^k)\|$ for all $k\geq 0$. Therefore, for all $j=1,\ldots,m$ and $k\geq 0$, similarly to the previous choice, we have
\[r_j^k = \vartheta_k\|\sum_{i=1}^m\lambda_i^k \nabla F_i(x^k)\| \leq \delta \bar{\vartheta}  \|\dsd(x^k)\|= \delta\bar{\vartheta} (\|\dsd(x^k)\| - \|\dsd(\xs)\|)\leq \delta\bar{\vartheta} L \|x^k - \xs \|,\]
and hence $r_j^k$ satisfies Assumption~\ref{assumption3} for all $j=1,\ldots,m$.

\section{Numerical experiments} \label{sec:numerical}

In this section, we present some numerical experiments to evaluate the effectiveness of the proposed scheme.
We are particularly interested in verifying how the introduced modifications affect the numerical performance of the method.
Toward this goal, we considered the following methods in our tests.
\begin{itemize}

\item Algorithm~\ref{alg:MBFGS} (Global BFGS): our globally convergent algorithm with $r_j^k$ chosen according to Choice~2 (see Section~\ref{sec:rjk}) and $\vartheta_k=0.1$ for all $k\geq 0$. It is worth noting that preliminary numerical tests demonstrated the superior efficiency of Choice~2 over Choice~1.

\item BFGS-Wolfe \cite{Leandro&Danilo}:  a BFGS algorithm  in which the Hessian approximations are updated, for each $j=1,\ldots,m$, by
\begin{align*}
\begin{split}
B_j^{k+1} \coloneqq &  B_j^{k} - \dfrac{(\rho_j^k)^{-1}B_j^{k}s^k(s^k)^\top B_j^{k}+[(s^k)^\top B_j^{k}s^k] y_j^{k}			(y_j^{k})^\top }{\left[(\rho_j^k)^{-1}-(y_j^{k})^\top s^k\right]^2 + (\rho_j^k)^{-1}(s^k)^\top B_j^ks^k}  \\
			& + \left[(\rho_j^k)^{-1} - (y_j^{k})^\top s^k\right]\dfrac{y_j^{k}(s^k)^\top B_j^{k} + B_j^{k}s^k(y_j^{k})^\top }{\left[(\rho_j^k)^{-1}-(y_j^{k})^\top s^k\right]^2 + (\rho_j^k)^{-1}(s^k)^\top B_j^ks^k}, 
			\end{split}
		\end{align*}
		where 	
\begin{equation*}\label{rho}
		\rho_j^k \coloneqq \left\{\begin{array}{ll}
		1/\left( (y_j^{k})^\top s^k\right), & \mbox{if } (y_j^{k})^\top s^k > 0\\
		1/\left(\D(x^{k+1},s^k)-\nabla F_j(x^k)^\top s^k\right), & \mbox{otherwise}.\\
		\end{array}\right.
		\end{equation*} 
and the step sizes are calculated satisfying the Wolfe conditions \eqref{wolfe1}--\eqref{wolfe2}.
We point out that this algorithm is well-defined for nonconvex problems, although it is not possible to establish global convergence in this general case. Additionally, in the case of scalar optimization ($m = 1$), it retrieves the classical scalar BFGS algorithm.

\item Cautious BFGS-Armijo \cite{QU2011397}: a BFGS algorithm  in which the Hessian approximations are updated, for each $j=1,\ldots,m$, by
\begin{equation*}\label{StBFGS}
B_j^{k+1}\!\coloneqq\!\left\{\hspace{-5pt}
\begin{tabular}{ll}
 $B_j^{k}\! -\! \dfrac{B_j^{k}s^k(s^k)^\top B_j^{k}}{(s^k)^\top B_j^ks^k}\!  +\!\dfrac{y_j^{k}(y_j^{k})^\top }{(y_j^{k})^\top s^k}$, &\hspace{-12pt} if $(y_j^{k})^\top s^k\geq \varepsilon \min\{1,|\theta(x^k)|\}$,\\
 $B_j^{k}$, & \hspace{-12pt} otherwise,
\end{tabular}\right.
\end{equation*}	
where $\varepsilon>0$ is an algorithmic parameter and the step sizes are calculated satisfying the Armijo-type condition given in \eqref{wolfe1}. In our experiments, we set $\varepsilon=10^{-6}$. This combination also leads to a globally convergent scheme, see \cite{QU2011397}.
\end{itemize}

We implemented the algorithms using Fortran~90. The search directions $d(x^k)$ (see \eqref {direction}) and optimal values $\theta(x^k)$ (see \eqref{theta}) were obtained by solving subproblem \eqref{newtonproblem} using the software Algencan \cite{algencan}. To compute step sizes satisfying the Wolfe conditions \eqref{wolfe1}--\eqref{wolfe2}, we employed the algorithm proposed in \cite{PerezPrudente2019}. This algorithm utilizes quadratic/cubic polynomial interpolations of the objective functions, combining backtracking and extrapolation strategies, and is capable of finding step sizes in a finite number of iterations. Interpolation techniques were also used to calculate step sizes satisfying only the Armijo-type condition. We set $\rho=10^{-4}$, $\sigma=0.1$, and initialized $B_j^0$ as the identity matrix for all $j=1,\ldots,m$. Convergence was reported when $|\theta(x^k)| \leq 5 \times \texttt{eps}^{1/2}$, where $\texttt{eps}=2^{-52}\approx 2.22 \times 10^{-16}$ represents the machine precision. When this criterion is met, we consider the problem successfully solved. The maximum number of allowed iterations was set to 2000. If this limit is reached, it means an unsuccessful termination. Our codes are freely available at \url{https://github.com/lfprudente/GlobalBFGS}.

The chosen set of test problems consists of both convex and nonconvex multiobjective problems commonly found in the literature and coincides with the one used in \cite{Leandro&Danilo}. Table~\ref{tab:problems} presents their main characteristics: The first column contains the problem name, while the ``$n$'' and ``$m$'' columns provide the number of variables and objectives, respectively. The column ``Conv.'' indicates whether the problem is convex or not. For each problem, the starting points were chosen within a box defined as $\{x\in\R^n \mid \ell\leq x \leq u\}$, where the lower and upper bounds, denoted by $\ell$ and $u \in\R^n$, are presented in the last two columns of Table~\ref{tab:problems}. It is important to note that the boxes specified in the table were used solely for defining starting points and were not employed as constraints during the algorithmic processes. For detailed information regarding the references and corresponding formulations of each problem, we refer the reader to  \cite{Leandro&Danilo}.

\begin{table}[H]
\centering
\begin{threeparttable}{\scriptsize
\fontsize{7.0}{8.0}\selectfont
\renewcommand{\arraystretch}{\myscaletable}
\begin{tabular}{|cccccc|} 
\hline
\rowcolor[gray]{.85} Problem & $n$  & $m$ & Conv. & $\ell$ & $u$ \\ 
\hline              
AP1&       2 &  3 & Y & $(-10,-10)$ & $(10,10)$   \\ \rowcolor[gray]{\lightgray}
AP2&      1 &  2 & Y  & $-100$ & $100$ \\
AP3&      2 &  2 & N  & $(-100,-100)$ & $(100,100)$ \\ \rowcolor[gray]{\lightgray}
AP4&      3 &  3 & Y & $(-10,-10,-10)$ & $(10,10,10)$  \\
BK1&   2 &  2 & Y  & $(-5,-5)$  & $(10,10)$ \\  \rowcolor[gray]{\lightgray}
DD1&  5 &  2 & N & $(-20,\ldots,-20)$ & $(20,\ldots,20)$ \\
DGO1&    1 &  2 & N & $-10$  & $13$  \\  \rowcolor[gray]{\lightgray}
DGO2 &  1 & 2  & Y  & $-9$  & $9$ \\
DTLZ1&  7 &  3 &  N & $(0,\ldots,0)$ & $(1,\ldots,1)$\\   \rowcolor[gray]{\lightgray}
DTLZ2&  7 & 3  & N & $(0,\ldots,0)$ & $(1,\ldots,1)$  \\
DTLZ3&  7 & 3  & N   & $(0,\ldots,0)$ & $(1,\ldots,1)$ \\  \rowcolor[gray]{\lightgray}
DTLZ4&  7 & 3  &  N  & $(0,\ldots,0)$ & $(1,\ldots,1)$ \\
FA1&   3 & 3  & N &  $(0.01,0.01,0.01)$  & $(1,1,1)$ \\  \rowcolor[gray]{\lightgray}
Far1&  2 &  2 & N  & $(-1,-1)$  &  $(1,1)$\\ 
FDS &  5 & 3 &Y & $(-2,\ldots,-2)$ &  $(2,\ldots,2)$ \\   \rowcolor[gray]{\lightgray}
FF1 &      2 &  2 & N  & $(-1,-1)$ &  $(1,1)$ \\ 
Hil1&     2 &  2 & N  & $(0,0)$  &   $(1,1)$ \\  \rowcolor[gray]{\lightgray}
IKK1&   2 &  3 & Y  & $(-50,-50)$  & $(50,50)$   \\ 
IM1&  2 &  2 & N & $(1,1)$  & $(4,2)$\\  \rowcolor[gray]{\lightgray}
JOS1&    2 &  2 & Y& $(-100,\ldots,-100)$ & $(100,\ldots,100)$  \\ 
JOS4& 20 & 2  &  N & $(-100,\ldots,-100)$ & $(100,\ldots,100)$ \\  \rowcolor[gray]{\lightgray}
KW2 &  2 &  2 & N  & $(-3,-3)$  &  $(3,3)$ \\   
LE1&    2 &  2 & N  & $(1,1)$  & $(10,10)$    \\   \rowcolor[gray]{\lightgray}
Lov1& 2 & 2 & Y & $(-10,-10)$ & $(10,10)$\\
Lov2&  2 & 2  & N  & $(-0.75,-0.75)$ & $(0.75,0.75)$ \\  \rowcolor[gray]{\lightgray}
Lov3&  2 & 2 & N & $(-20,-20)$ & $(20,20)$\\\ 
Lov4& 2 & 2 & N & $(-20,-20)$ & $(20,20)$\\   \rowcolor[gray]{\lightgray}
Lov5& 3 & 2 & N & $(-2,-2,-2)$ & $(2,2,2)$ \\
Lov6&  6 & 2  & N & $(0.1,-0.16,\ldots,-0.16)$ & $(0.425,0.16,\ldots,0.16)$ \\  \rowcolor[gray]{\lightgray}
LTDZ&   3 &  3 &  N & $(0,0,0)$  & $(1,1,1)$ \\ 
MGH9&  3 & 15  & N & $(-2,-2,-2)$ &  $(2,2,2)$  \\    \rowcolor[gray]{\lightgray}
MGH16&    4 &  5 & N  & $(-25,-5,-5,-1)$ & $(25,5,5,1)$ \\ 
MGH26 &   4 &  4 & N & $(-1,-1,-1-1)$ &  $(1,1,1,1)$ \\   \rowcolor[gray]{\lightgray}
MGH33 &    10 &  10 & Y & $(-1,\ldots,-1)$ &  $(1,\ldots,1)$ \\   
MHHM2&   2 &  3 & Y & $(0,0)$  & $(1,1)$ \\   \rowcolor[gray]{\lightgray}
MLF1&  1 & 2  &  N & $0$  & $20$ \\
MLF2&     2 &  2 & N   & $(-100,-100)$ & $(100,100)$\\   \rowcolor[gray]{\lightgray}
MMR1  &      2 &  2 & N  & $(0.1,0)$   & $(1,1)$   \\
MMR2&  2 & 2  &  N  & $(0,0)$   & $(1,1)$ \\   \rowcolor[gray]{\lightgray}
MMR3  &      2 &  2 & N & $(-\pi,-\pi)$  & $(\pi,\pi)$  \\ 
MMR4& 3 & 2  & N   & $(0,0,0)$   & $(4,4,4)$ \\ \rowcolor[gray]{\lightgray}
MOP2 &       2 &  2 & N  & $(-4,-4)$ &  $(4,4)$   \\  
MOP3 &     2 &  2 & N  & $(-\pi,-\pi)$  & $(\pi,\pi)$  \\ \rowcolor[gray]{\lightgray}
MOP5&      2 &  3 & N & $(-30,-30)$ &  $(30,30)$  \\
MOP6&  2 & 2  & N   & $(0,0)$ &  $(1,1)$ \\    \rowcolor[gray]{\lightgray}
MOP7&      2 &  3 & Y & $(-400,-400)$  & $(400,400)$  \\  
PNR&   2 &  2 & Y & $(-2,-2)$  &   $(2,2)$ \\    \rowcolor[gray]{\lightgray}
QV1&    10 &  2 & N & $(0.01,\ldots,0.01)$  & $(5,\ldots,5)$  \\  
SD &  4 & 2  &  Y & $(1,\sqrt{2},\sqrt{2},1)$  & $(3,3,3,3)$ \\   \rowcolor[gray]{\lightgray}
SK1 &       1 &  2 & N  & $-100$ &$100$  \\ 
SK2 &       4 &  2 & N & $(-10,-10,-10,-10)$ & $(10,10,10,10)$ \\    \rowcolor[gray]{\lightgray}
SLCDT1&     2 &  2 & N & $(-1.5,-1.5)$  &  $(1.5,1.5)$  \\ 
SLCDT2&  10 &  3 & Y  & $(-1,\ldots,-1)$  &  $(1,\ldots,1)$  \\    \rowcolor[gray]{\lightgray}
SP1&      2 &  2 & Y   &  $(-100,-100)$ & $(100,100)$   \\  
SSFYY2&      1 &  2 & N  &  $-100$ & $100$  \\    \rowcolor[gray]{\lightgray}
TKLY1&4 & 2  & N  &  $(0.1,0,0,0)$ & $(1,1,1,1)$  \\
Toi4&   4 &  2 & Y & $(-2,-2,-2,-2)$  & $(5,5,5,5)$    \\     \rowcolor[gray]{\lightgray}
Toi8&   3 &  3 & Y  & $(-1,-1,-1,-1)$  & $(1,1,1,1)$   \\
Toi9 & 4   & 4 & N  & $(-1,-1,-1,-1)$  & $(1,1,1,1)$  \\     \rowcolor[gray]{\lightgray}
Toi10 & 4 & 3 & N & $(-2,-2,-2,-2)$  & $(2,2,2,2)$        \\  
VU1&    2 &  2 & N & $(-3,-3)$ &$(3,3)$  \\     \rowcolor[gray]{\lightgray}
VU2& 2  &  2 & Y & $(-3,-3)$ &$(3,3)$  \\
ZDT1&30 & 2  & Y  & $(0,\ldots,0)$ & $(1,\ldots,1)$   \\   \rowcolor[gray]{\lightgray}
ZDT2&  30 & 2  & N  &  $(0.01,\ldots,0.01)$ & $(1,\ldots,1)$   \\ 
ZDT3& 30  & 2  & N   &  $(0.01,\ldots,0.01)$ & $(1,\ldots,1)$  \\   \rowcolor[gray]{\lightgray}
ZDT4&  30 &  2 & N  &  $(0.01,-5,\ldots,-5)$ & $(1,5,\ldots,5)$  \\ 
ZDT6& 10 & 2  & N  &  $(0.01,\ldots,0.01)$ & $(1,\ldots,1)$   \\   \rowcolor[gray]{\lightgray}
ZLT1&  10 &  5 & Y  & $(-1000,\ldots,-1000)$ &$(1000,\ldots,1000)$  \\  
\hline
\end{tabular}}
\end{threeparttable}
\caption{List of test problems.}
\label{tab:problems}
\end{table}

In multiobjective optimization, the primary objective is to estimate the Pareto frontier of a given problem. A commonly used strategy is to execute the algorithm from multiple distinct starting points and collect the Pareto optimal points found. Thus, each problem listed in Table~\ref{tab:problems} was addressed by running all algorithms from 300 randomly generated starting points within their respective boxes. In this first stage, each problem/starting point was considered an independent instance and a run was considered successful if an approximate critical point was found, regardless of the objective functions values.
Figure~\ref{fig:results} presents the comparison of the algorithms in terms of CPU time using a performance profile \cite{dolan2002benchmarking}.
As can be seen, Algorithm~\ref{alg:MBFGS} and the BFGS-Wolfe algorithm exhibited virtually identical performance, outperforming the Cautious BFGS-Armijo algorithm. All methods proved to be robust, successfully solving more than $98\%$ of the problem instances.
It is worth noting that although the BFGS-Wolfe algorithm enjoys (theoretical) global convergence only under convexity assumptions, it also performs exceptionally well for nonconvex problems, which is consistent with observations in the scalar case.

\noindent\begin{figure}[h!]
\centering
\fontsize{6.0}{3.0}\selectfont
\includegraphics[scale=\myscale]{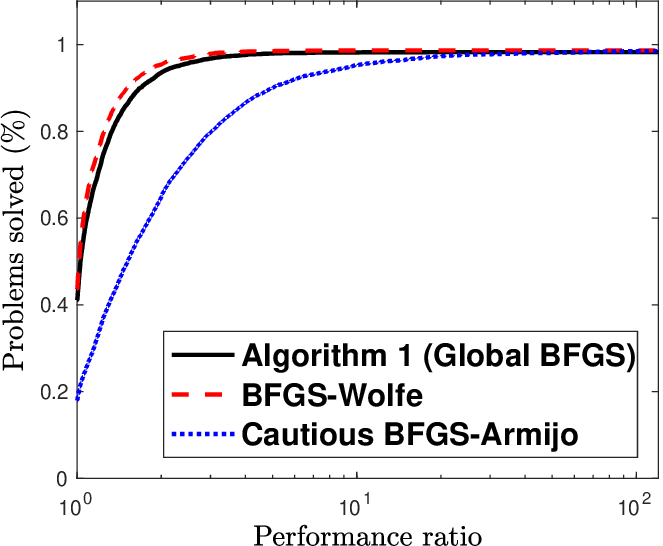}
\caption{Performance profiles considering 300 starting points for each test problem using the CPU time as performance measurement.}
\label{fig:results}
\end{figure}

In the following, we evaluate the algorithms based on their ability to properly generate Pareto frontiers.
To assess this, we employ the widely recognized {\it Purity} and ($\Gamma$ and $\Delta$) {\it Spread} metrics.
In summary, the Purity metric measures the solver's ability to identify points on the Pareto frontier, while the Spread metric evaluates the distribution quality of the obtained Pareto frontier.
For a detailed explanation of these metrics and their application together with performance profiles, we refer the reader to \cite{doi:10.1137/10079731X}.
It is important to note that, at this stage, data referring to all starting points are combined for each problem, taking into account the objective function values found.
The results in Figure~\ref{fig:metrics} indicate that Algorithm~\ref{alg:MBFGS} performed slightly better in terms of the Purity and $\Delta$-Spread metrics, with no significant difference observed for the $\Gamma$-Spread metric among the three algorithms.

\noindent\begin{figure}[h!]
\centering
 \begin{tabular}{ccc}
 (a) Purity &(b) $\Gamma$-Spread &(c) $\Delta$-Spread \\
\includegraphics[scale=\myscale]{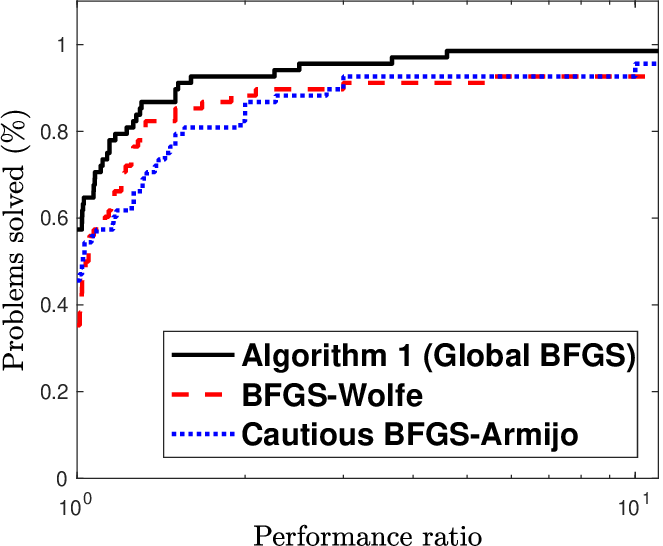}& \includegraphics[scale=\myscale]{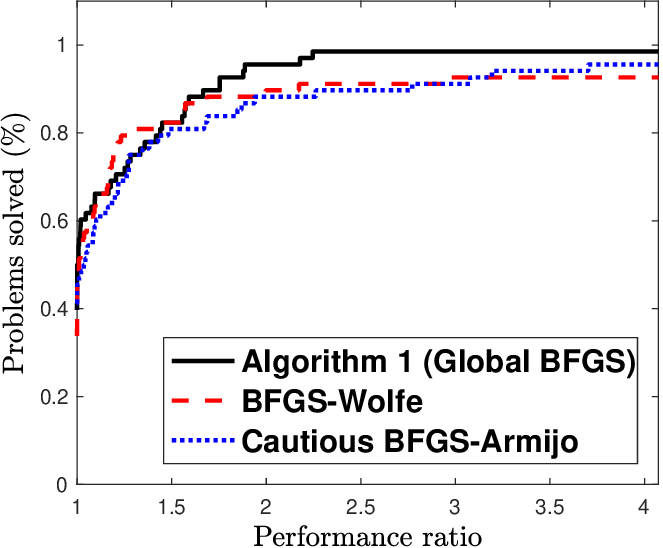}&\includegraphics[scale=\myscale]{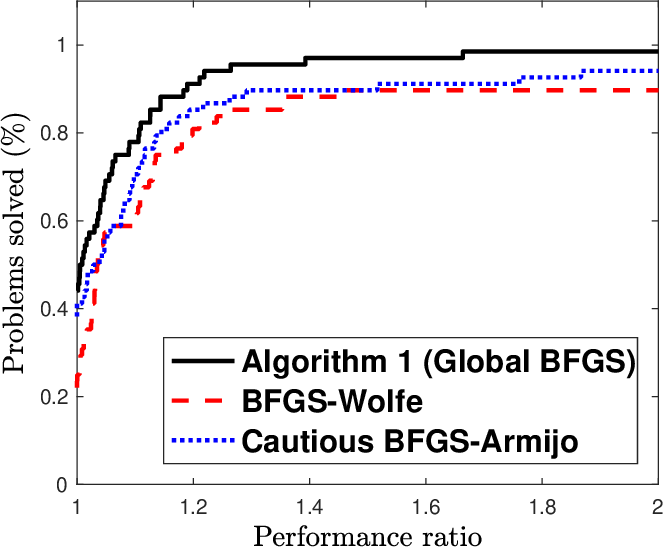}\\
\end{tabular}
\caption{Metric performance profiles: (a) Purity; (b) $\Gamma$-Spread; (c) $\Delta$-Spread.}
\label{fig:metrics}
\end{figure}

The numerical results allow us to conclude that the modifications made to the BFGS method to ensure global convergence for nonconvex problems do not compromise its practical performance.

\section{Final remarks} \label{sec:final}

 Based on the work of Li and Fukushima~\cite{Li-Fu-MBFGS}, we presented a modified BFGS scheme that achieves global convergence without relying on convexity assumptions for the objective function $F$. The global convergence analysis depends only on the requirement of $F$ having continuous Lipschitz gradients. Furthermore, we showed that by appropriately selecting $r_j^k$ to satisfy Assumption~\ref{assumption3} and under suitable conditions, the rate of convergence becomes superlinear. We also discussed some practical choices for $r_j^k$. The introduced modifications preserve the simplicity and practical efficiency of the BFGS method. It is worth emphasizing that the assumptions considered in our approach are natural extensions of those commonly employed in the context of scalar-valued optimization.

 \vspace{12pt}
\noindent{\bf Data availability statement}\\
The codes supporting the numerical experiments are freely available in the Github repository, \url{https://github.com/lfprudente/GlobalBFGS}.

 \vspace{12pt}
\noindent{\bf Conflicts of interest}\\
The authors declare that they have no conflict of interest.

\appendix
\section{Appendix}

In the main body of the article, we have chosen to exclude proofs that can be readily derived from existing sources in order to enhance the overall readability of the text. However, in this appendix, we provide these proofs to ensure self-contained completeness.

\vspace{5pt}
\noindent{\bf Notation.} The cardinality of a set $C$ is denoted by $|C|$.
The ceiling and floor functions are denoted by $\lceil \cdot \rceil$ and $\lfloor \cdot \rfloor$, respectively; i.e., if $x\in\R$, then $\lceil x \rceil$ is the least integer greater than or equal to $x$ and $\lfloor x \rfloor$ is the greatest integer less than or equal to $x$.
The notation $\varphi(t):=o(t)$ for $t>0$ means that $\lim_{t\to 0}\varphi(t)/t=0$.

\subsection{Proofs of Section~\ref{sec:algorithm}}

Throughout this section, we assume that Assumption~\ref{assumption1} holds.

\begin{proofof}{Proposition~\ref{prop:zout}}
It follows from \eqref{wolfe2}, the definition of $\D(\cdot,\cdot)$, the Cauchy-Schwarz inequality, and Assumption~\ref{assumption1}{\it (iii)} that
  \begin{align*}
  -(1-\sigma)\D(x^k,d^k) &\leq  \D(x^{k+1},d^k) - \D(x^k,d^k) \\
  & \leq \max_{j = 1,\dots,m }\left(\nabla F_j(x^{k+1})-\nabla F_j(x^k)\right)^\top d^k \\
  &\leq \max_{j = 1,\dots,m }\|\nabla F_j(x^{k+1})-\nabla F_j(x^k)\| \|d^k\|\leq L\alpha_k \|d^k\|^2,
    \end{align*}
    where the second inequality follows from the fact that, for any $u,v\in\R^m$, we have $\max_j(u_j - v_j) \geq \max_ju_j - \max_jv_j$.
  Hence,
  \begin{equation}\label{spfc}
    \dfrac{\D(x^k,d^k)^2}{\|d^k\|^2} \leq  -\dfrac{L}{(1-\sigma)}\alpha_k\D(x^k,d^k).
  \end{equation}
  Now, since $\{x^k\}\subset\Ls$, Assumption~\ref{assumption1}{\it (i)--(ii)} implies the existence of ${\cal F}\in\R$ such that $F_j(x^k)\geq {\cal F}$ for all $k\geq 0$ and $j=1,\ldots,m$. Therefore, by \eqref{wolfe1}, we have
  $${\cal F}\leq F_j(x^{k+1}) \leq F_j(x^0)+\rho \sum_{\ell=0}^k \alpha_\ell \D(x^\ell,d^\ell),  \quad \forall j = 1,\ldots,m.$$
Some algebraic manipulations yields
  $$-\dfrac{L}{\rho(1-\sigma)} \min_{j = 1,\dots,m}\left\{{\cal F}-F_j(x^0)\right\} \geq -\dfrac{L}{(1-\sigma)}  \sum_{\ell=0}^k \alpha_\ell \D(x^\ell,d^\ell)>0.$$
  Therefore,
  $$-\dfrac{L}{(1-\sigma)}  \sum_{k\geq 0} \alpha_k \D(x^k,d^k)<\infty,$$
  which together with \eqref{spfc} gives \eqref{Zout}.
\end{proofof}

To prove Proposition~\ref{l:poa}, we will make use of function \eqref{trdet}. Let us define
$$q_j^k:=\dfrac{(s^k)^\top B_j^ks^k}{(s^k)^\top s^k},  \quad \forall j = 1,\ldots,m.$$
Thus, from the same arguments that led to \eqref{fortaleza}, we obtain
\begin{equation}\label{santos}
 \psi(B_j^{k+1})= \psi(B_j^{k}) +\left[\dfrac{\|\gamma_j^k\|^2 }{(\gamma_j^{k})^\top s^k} - \ln\left(\dfrac{(\gamma_j^{k})^\top s^k}{(s^k)^\top s^k}\right) -1\right] -\xi_j^k,
\end{equation}
where
 $$\xi_j^k:=-\ln(\cos^2\beta_j^k)-\left[1 - \dfrac{q_j^k}{\cos^2\beta_j^k} + \ln\bigg(\dfrac{q_j^k}{\cos^2\beta_j^k}\bigg)\right].$$
 Note from Lemma~\ref{l:natal}{\it (i)} that $\xi_j^k\geq 0$.

\begin{proofof}{Proposition~\ref{l:poa}}
Let $k\geq 1$ and $p \in (0,1)$ be given and set $\varepsilon\coloneqq 1-p$ and $\bar{p}\coloneqq 1-\varepsilon/m$. Let $j\in\{1,\ldots, m\}$ be an arbitrary index. From \eqref{santos} and \eqref{atletico}, we have
 $$ \psi(B_j^{k+1})\leq \psi(B_j^{0}) +\left[C_2 - \ln(C_1) -1\right](k+1) -\sum_{\ell=0}^k\xi_j^\ell.$$
Therefore, since $\psi(B_j^{k+1})>0$, we obtain
  $$ \dfrac{1}{k+1}\sum_{\ell=0}^k\xi_j^\ell\leq \dfrac{\psi(B_j^{0})}{k+1} +\left[C_2 - \ln(C_1) -1\right].$$
Let ${\cal J}_j^{k}$ be the set consisting of the $\lceil \bar{p}(k+1) \rceil$ indices corresponding to the $\lceil \bar{p}(k+1) \rceil$ smallest values of $\xi_j^\ell$, for $\ell\leq k$, and define $\bar{\xi}_j^{k}:=\max_{\ell\in{\cal J}_j^{k}}\xi_j^\ell$. Then,
  $$ \dfrac{1}{k+1}\sum_{\ell=0}^k\xi_j^\ell\geq  \dfrac{1}{k+1}\left[\bar{\xi}_j^{k}+\sum_{\ell=0, \ell\notin{\cal J}_j^{k}}^k \xi_j^\ell\right]\geq \dfrac{1}{k+1} \left[\bar{\xi}_j^{k}+\bar{\xi}_j^{k}(k+1-\lceil \bar{p}(k+1) \rceil)\right]\geq \bar{\xi}_j^{k}(1-\bar{p}),$$
  where the last inequality is due to $\lceil \bar{p}(k+1) \rceil\leq \bar{p}(k+1)+1.$
By combining the above two inequalities, we get, for all $\ell\in{\cal J}_j^{k}$, 
$$\xi_j^\ell\leq \bar{\xi}_j^{k}\leq \dfrac{1}{1-\bar{p}}\left[\psi(B_j^{0})+C_2 - \ln(C_1) -1\right]=:\zeta_j.$$
Therefore, by the definition of $\xi_j^\ell$, we obtain, for all $\ell\in{\cal J}_j^{k}$, 
$$-\ln(\cos^2\beta_j^\ell)\leq\xi_j^\ell\leq \zeta_j,$$
and hence
$$\cos\beta_j^\ell\geq e^{-\zeta_j/2}=:\delta_j.$$
This means that $\cos\beta_j^{\ell} \geq \delta_j$ for at least $\lceil \bar{p}(k+1) \rceil$ values of $\ell \in \{0,1,\ldots,k\}$.

Now, let us define $\delta\coloneqq\min_{j=1,\ldots,m}\delta_j$ and, for all $j=1,\ldots,m$,
$${\cal G}_j^k\coloneqq\{\ell\in\{0,1,\ldots,k\}\mid \cos\beta_j^{\ell} \geq \delta\} \quad \mbox{and} \quad {\cal B}_j^k\coloneqq\{\ell\in\{0,1,\ldots,k\}\mid \cos\beta_j^{\ell} < \delta\}.$$
It is easy to see that ${\cal J}_j^{k}\subset {\cal G}_j^k$, ${\cal G}_j^k\cap{\cal B}_j^k=\emptyset$ and $|{\cal G}_j^k|+|{\cal B}_j^k|=k+1$. Therefore, by the definition of $\bar{p}$ and using some properties of the ceiling and floor functions, we have, for all $j=1,\ldots,m$,
	\[
	|{\cal G}_j^k| \geq |{\cal J}_j^{k}|= \lceil \bar{p}(k+1) \rceil = (k+1) + \lceil - \frac{\varepsilon}{m}(k+1)\rceil = (k+1) - \lfloor \frac{\varepsilon}{m}(k+1)\rfloor,
	\]
	and hence $|{\cal B}_j^k|\leq \lfloor \frac{\varepsilon}{m}(k+1)\rfloor$.
	Thus, 
	$$|\mathop \cup_{j=1}^m {\cal B}_j^k|\leq  m \lfloor \frac{\varepsilon}{m}(k+1)\rfloor\leq \varepsilon (k+1).$$
	As consequence, since we also have $|\mathop \cap_{j=1}^m {\cal G}_j^k|\ +  |\mathop \cup_{j=1}^m {\cal B}_j^k|=k+1$, by using the definition of $\varepsilon$, it follows that
	\[
	|\mathop \cap_{j=1}^m {\cal G}_j^k| \geq (k+1) - \varepsilon(k+1) = (1-\varepsilon)(k+1)=p(k+1),
	\]
	which concludes the proof.
\end{proofof}

\subsection{Proofs of Section~\ref{sec:superlinear}}

In this section, we make use of Assumption~\ref{assumption2}. In particular, and without loss of generality, we assume that $\{x^k\}\subset U$, where $U$ is a neighborhood of $\xs$ such that \eqref{lip}  and \eqref{sconvex} hold.

\subsubsection{Proof of Proposition~\ref{prop:linear}}

We start with some auxiliary technical results.

\begin{lemma}\label{prop:pseud:angle}
	Suppose that Assumption~\ref{assumption2} holds. 
Let $\beta_j^k$ be the angle between the vectors $s^k$ and $B_j^ks^k$, for all $k\geq 0$ and $j=1,\ldots,m$.
Then, for all $k\geq 0$,
	\begin{equation*}\label{pseud-angle-condit}
		\D(x^k,d^k) \leq -\dfrac{\delta_k}{2}\|d^k\| \| \dsd(x^k)\|, 
	\end{equation*}
	where $\delta_k \coloneqq \min_{j =1,\dots,m } \cos\beta_j^k$.
\end{lemma}
\begin{proof}
For a given $k\geq 0$, by using the definitions of $\delta_k$, $\cos\beta_j^k$, and $s^k$, we obtain
\[
\delta_k\leq  \cos\beta_j^k = \frac{(s^k)^\top B_j^ks^k}{\|s^k\|\|B_j^ks^k\|}=\frac{(d^k)^\top B_j^kd^k}{\|d^k\|\|B_j^kd^k\|}, \quad  \forall j=1,\ldots,m.
\]
Therefore, from Lemma~\ref{l:pareto}{\it (ii)} and \eqref{thetaN}, we have
\[
-\D(x^k,d^k)>-\theta(x^k) =  \frac{1}{2} \sum_{j=1}^{m} \lambda_j^k (d^k)^\top B_j^k d^k \geq \frac{\delta_k}{2}  \|d^k\| \sum_{j=1}^{m} \lambda_j^k \|B_j^kd^k\|. 
\]
Applying the triangle inequality, together with \eqref{dN}, \eqref{lambdaN}, and Lemma~\ref{l:vtheta}{\it (iv}), we obtain:
\[-\D(x^k,d^k) \geq \frac{ \delta_k}{2} \|d^k\| \|\sum_{j=1}^{m} \lambda_j^k B_j^kd^k\| =  \frac{\delta_k}{2}  \|d^k\| \|\sum_{j=1}^{m} \lambda_j^k \nabla F_j(x^k)\|\geq  \frac{ \delta_k}{2} \|d^k\| \|\dsd(x^k)\|.\]
\end{proof}

\begin{lemma}\label{peru}
	Suppose that Assumption~\ref{assumption2} holds. Then, for all  $k\geq 0$, we have:
	\begin{itemize}
		\item[(i)] $\|x^k-x^{\ast}\|\leq \dfrac{2}{\underline{L}}\|\dsd(x^k)\|$;
		\item[(ii)] $\|s^k\| \geq \dfrac{(1-\sigma)}{2L}\delta_k\|\dsd(x^k)\|$, where $\delta_k$ is given as in Lemma~\ref{prop:pseud:angle};
		\item[(iii)] $\dfrac{(\gamma_j^{k})^\top s^k}{\|s^k\|^2}\geq \underline{L}$,  for all $j=1,\ldots,m$;
		\item[(iv)] $\dfrac{\|\gamma_j^{k}\|^2}{(\gamma_j^{k})^\top s^k}\leq \dfrac{(2L+ \bar{\vartheta}\bar{c})^2}{\underline{L}}$, for all $j=1,\ldots,m$ and some constant $\bar{c}>0.$
	\end{itemize}
\end{lemma}

\begin{proof}
Consider part {\it (i)}. 
For a given value of $k\geq 0$, consider $\lsd(x^k)\in\R^m$ as in \eqref{lsd}--\eqref{dsd}, and define the scalar-valued function  $F_{SD}\colon\R^n\to\R$ as follows: 
	$$F_{SD}(x)\coloneqq \sum_{j=1}^m \lambda^{SD}_j(x^k) F_j(x).$$
	Therefore, by taking $z\coloneqq x^{\ast}-x^k$, it follows from \eqref{lsd} and \eqref{sconvex} that
	\[\int_{0}^{1} (1 - \tau)z^\top  \nabla^2 F_{SD}(x^k + \tau z)zd\tau \geq \dfrac{\underline{L}}{2}\|z\|^2.\]
	Evaluating this integral (which can be done by integration by parts), and considering that $\dsd(x^k)=-\nabla F_{SD}(x^k)$, we obtain
	\[F_{SD}(x^{\ast})-F_{SD}(x^k)+\dsd(x^k)^\top (x^{\ast}-x^k)\geq \dfrac{\underline{L}}{2}\|x^{\ast}-x^k\|^2.\]
	Given that $F_j(x^{\ast})\leq F_j(x^k)$ for all $j=1,\ldots,m$, we have $F_{SD}(x^{\ast})-F_{SD}(x^k)\leq 0$ and thus
	\[\dfrac{\underline{L}}{2}\|x^{\ast}-x^k\|^2\leq \dsd(x^k)^\top (x^{\ast}-x^k)\leq \|\dsd(x^k)\| \|x^{\ast}-x^k\|,\]
	which proves  part {\it (i)}.
	
	Consider part {\it (ii)}. 
	By using \eqref{wolfe2} and the definitions of ${\cal D}(\cdot,\cdot)$ and $y_j^k$, we obtain
$$-(1-\sigma)\D(x^k,d^k)  \leq \D(x^{k+1},d^k) - \D(x^k,d^k)\leq  \max_{j = 1,\dots,m }(y_j^k)^\top d^k,$$
	which, together with \eqref{uruguai}, yields
	\[
	-(1-\sigma)\D(x^k,d^k)  \leq  \max_{j = 1,\dots,m }(s^k)^\top  \bar{G}_j^k d^k =\alpha_k  \max_{j = 1,\dots,m }(d^k)^\top  \bar{G}_j^k d^k \leq L \alpha_k \|d^k\|^2=L\|s^k\|\|d^k\|,
	\]
	where  the latter inequality comes from \eqref{sconvex}. Therefore, taking into account that $\sigma<1$,  by Lemma~\ref{prop:pseud:angle}, we obtain
	\[
	(1-\sigma)\dfrac{\delta_k}{2}\|d^k\| \| \dsd(x^k)\| \leq L\|s^k\|\|d^k\|,
	\]
	which gives the desired inequality.
	
	Part {\it (iii)} is a direct consequence of \eqref{joaopessoa}. Finally, consider part {\it (iv)}. From  \eqref{etajk} and \eqref{sconvex}, we have
$$|\eta_j^k|\leq \dfrac{\|y_{j}^k\|}{\|s^k\|} = \dfrac{\|\nabla F_{j}(x^{k+1})-\nabla F_j(x^k)\|}{\|x^{k+1}-x^{k}\|}\leq L.$$
Furthermore, since $\{x^k\}\subset U$, by \eqref{rjk} and using continuity arguments, there exists a constant $\bar{c}>0$ such that
$$0\leq r_j^k\leq |\eta_j^k| + \vartheta_k \|\sum_{i=1}^m\mu_i^k \nabla F_i(x^k)\| \leq L+ \bar{\vartheta}\bar{c},$$
and hence, by \eqref{gammajk},
$$\|\gamma_j^k\| \leq  \|y_j^k\| + r_j^k \|s^k\| = \left(\dfrac{\|y_j^k\|}{\|s^k\|} + r_j^k \right) \|s^k\| \leq (2L+ \bar{\vartheta}\bar{c}) \|s^k\|.$$
Therefore, using the inequality in part {\it (iii)}, we obtain
$$\dfrac{\|\gamma_j^{k}\|^2}{(\gamma_j^{k})^\top s^k}=\dfrac{\|\gamma_j^{k}\|^2}{\|s^k\|^2} \dfrac{\|s^k\|^2}{(\gamma_j^{k})^\top s^k}\leq \dfrac{(2L+ \bar{\vartheta}\bar{c})^2}{\underline{L}}, \quad \forall j=1,\ldots,m,$$
concluding the proof.
\end{proof}

We are now able to prove Proposition~\ref{prop:linear}

\begin{proofof}{Proposition~\ref{prop:linear}}
	Let $\lambda^{SD}(\xs)\in\R^m$ be a steepest descent multiplier associated with $\xs$ as in \eqref{lsd}--\eqref{dsd}, and define the scalar-valued function $F_{\ast}\colon\R^n\to\R$ as follows:
	\[F_{\ast}(x)\coloneqq \sum_{j=1}^m \lambda^{SD}_j(\xs) F_j(x).\] 
	Note that 
	\begin{equation}\label{equador}
		\nabla F_{\ast}(\xs)= \sum_{j=1}^m \lambda^{SD}_j(\xs) \nabla F_j(\xs) = -\dsd(\xs) = 0,
	\end{equation}
	where the last equality comes from Lemma~\ref{l:vtheta}{\it (i)}.
	Now, by using \eqref{sconvex}, we obtain
$$\nabla F_j(\xs)^\top (x^k - \xs) + \dfrac{\underline{L}}{2}\|x^k - \xs\|^2  \leq F_j(x^k) - F_j(\xs) \leq \nabla F_j(\xs)^\top (x^k - \xs) + \dfrac{L}{2}\|x^k - \xs\|^2,$$
	for all $j=1,\ldots,m$ and for all $k\geq 0$. By multiplying this expression by $\lambda_j^{SD}(\xs)$, summing over all indices $j=1,\ldots,m$, and taking into account \eqref{lsd} and \eqref{equador}, we obtain
	\begin{equation}\label{chile}
		\dfrac{\underline{L}}{2}\|x^k - \xs\|^2  \leq F_{\ast}(x^k) - F_{\ast}(\xs) \leq   \dfrac{L}{2}\|x^k - \xs\|^2, \quad \forall k\geq 0.
	\end{equation}
	From the right hand side of \eqref{chile} and Lemma~\ref{peru}{\it (i)}, we obtain
	\begin{equation}\label{colombia}
		F_{\ast}(x^k) - F_{\ast}(\xs) \leq   \dfrac{2L}{\underline{L}^2}\|\dsd(x^k)\|^2, \quad \forall k\geq 0.
	\end{equation}
	On the other hand, \eqref{wolfe1} gives
	\[F_{\ast}(x^{k+1})  \leq F_{\ast}(x^k)+\rho  \alpha_k \D(x^k,d^k), \quad \forall k\geq 0.\]
	Therefore, from Lemma~\ref{prop:pseud:angle} and Lemma~\ref{peru}{\it(ii)}, we have
	\[
	F_{\ast}(x^{k+1})  \leq F_{\ast}(x^k) -  \dfrac{\rho }{2}\delta_k\|s^k\| \| \dsd(x^k)\|\leq F_{\ast}(x^k) -  \dfrac{\rho (1-\sigma)}{4L}\delta_k^2 \| \dsd(x^k)\|^2,  \quad \forall k\geq 0.
	\]
Hence, by subtracting the term $F_{\ast}(\xs)$ in both sides of the latter inequality, and using \eqref{colombia}, we obtain
	\begin{equation}\label{venezuela}
		F_{\ast}(x^{k+1}) - F_{\ast}(\xs) \leq  \left( 1 -  \dfrac{\rho (1-\sigma)\underline{L}^2}{8L^2}\delta_k^2  \right) \left(F_{\ast}(x^k)-F_{\ast}(\xs)\right), \quad \forall k\geq 0.
	\end{equation}
	For each $k\geq 0$, define $\bar r_k\coloneqq 1-\rho (1-\sigma)\underline{L}^2\delta_k^2/(8L^2)$. It is easy to see that $\bar r_k\in (0,1]$, for all $k\geq 0$.

	Now, given $p\in(0,1)$, we can invoke Lemma~\ref{peru}{\it (iii)}--{\it (iv)} to apply Proposition \ref{l:poa}. This implies that there exists a constant $\delta > 0$ such that, for any $k\geq 1$, the number of elements $\ell\in\{0,1,\ldots,k\}$ for which $\delta_{\ell} \geq \delta$ is at least $\lceil p(k+1) \rceil$. By defining ${\cal G}_k\coloneqq\{\ell\in\{0,1,\ldots,k\}\mid \delta_{\ell} \geq \delta\}$, we have $|{\cal G}_k|\geq \lceil p(k+1) \rceil$ and
	\[
	\bar r_{\ell}\leq 1 -  \dfrac{\rho (1-\sigma)\underline{L}^2\delta^2}{8L^2}    \coloneqq \bar r<1, \quad \forall\ell\in {\cal G}_k.
	\] 
Thus, from \eqref{venezuela} and considering that  $F_{\ast}(x^0) - F_{\ast}(\xs) >0$, we obtain, for all $k\geq 1$,
	\begin{align*}
		F_{\ast}(x^{k+1}) - F_{\ast}(\xs)& \leq \left[\prod_{\ell = 0}^{k}  {\bar r}_{\ell}\right] \left(F_{\ast}(x^0) - F_{\ast}(\xs) \right)  \leq \left[\prod_{\ell \in {\cal G}_k}{\bar r}_{\ell} \right]  \left(F_{\ast}(x^0) - F_{\ast}(\xs) \right) \\
		&  \leq \left[\prod_{\ell \in {\cal G}_k}{\bar r} \right]  \left(F_{\ast}(x^0) - F_{\ast}(\xs) \right) \leq {\bar r}^{ \lceil p(k+1) \rceil} \left(F_{\ast}(x^0) - F_{\ast}(\xs) \right),
	\end{align*}
where the second inequality follows from the fact that ${\bar r}_{\ell}\leq 1$ for all $\ell\notin {\cal G}_k$. 
Therefore, by taking $r:={\bar r}^p$, we obtain
\[
		F_{\ast}(x^{k+1}) - F_{\ast}(\xs)\leq r^{k+1}\left(F_{\ast}(x^0) - F_{\ast}(\xs) \right), \quad \forall k\geq 1. 
\]
Combining this with the left hand side of \eqref{chile}, we find
	\[
	\|x^{k+1} - \xs\|^\nu   \leq  \left[\dfrac{2}{\underline{L}}\left(F_{\ast}(x^0) - F_{\ast}(\xs) \right)\right]^{\nu/2}  (r^{\nu/2})^{k+1}.
	\]
Finally, by summing this expression and taking into account that $r<1$, we conclude that  \eqref{argentina} holds.
\end{proofof}

\subsubsection{Proof of Theorem~\ref{teo:convergence}}

We start by introducing an auxiliary result.

\begin{lemma}\label{l:mexico}
	Suppose that Assumptions~\ref{assumption2} and \ref{assumption3} hold.  Then, there exists $\bar{a}>0$ such that
	\begin{equation}\label{inglaterra}
		  |\theta(x^k)|\geq \bar{a}\|d^k\|^2,
	\end{equation} 
	for all $k$ sufficiently large. Moreover, 
	\begin{equation}\label{limd}
		\lim_{k\to\infty}\|d^k\|=0.
	\end{equation} 
\end{lemma}
\begin{proof}
By choosing $\gamma\in(0,1)$ and recalling that $s^k=\alpha_k d^k$, it follows from \eqref{limq} that
	\begin{equation*}\label{espanha}
	  \dfrac{(d^k)^\top  B_j^{k}d^k}{(d^k)^\top   \HF_j(\xs) d^k} \geq 1-\gamma, \quad \forall j=1,\ldots,m,
	\end{equation*}
	for all $k$ sufficiently large. 
	Thus, by \eqref{sconvex}, we obtain
	\[ (d^k)^\top B_j^kd^k \geq \underline{L}(1- \gamma)\|d^k\|^2, \quad \forall j=1,\ldots,m,\]
	for all $k$ sufficiently large. Therefore, using \eqref{lambdaN} and \eqref{thetaN}, we have
	\[ |\theta(x^k)| = \frac{1}{2}  \sum_{j=1}^{m} \lambda_j^k (d^k)^\top  B_j^kd^k \geq \dfrac{\underline{L}(1- \gamma)}{2}\|d^k\|^2 ,\]
	for all $k$ sufficiently large. Defining $\bar{a}\coloneqq \underline{L}(1- \gamma)/2$, we establish \eqref{inglaterra}.
	Finally, by combining \eqref{inglaterra}, Lemma~\ref{l:pareto}{\it (ii)}, and Proposition~\ref{prop:zout}, we obtain
	\[ 0\leq \lim_{k\to \infty}\bar{a}\|d^k\| \leq \lim_{k\to \infty} \dfrac{|\theta(x^k)|}{\|d^k\|}\leq \lim_{k\to \infty} \dfrac{|\D(x^k,d^k)|}{\|d^k\|}=0, \]
	which concludes the proof.
\end{proof}

Recalling that $\lambda^k\in\R^m$ is the Lagrange multiplier associated to $x^k$ of problem \eqref{newtonproblem}  fulfilling \eqref{dN}--\eqref{lambdaN}, let us define
\begin{equation}\label{alemanha}
	F_{\lambda}^k(x) \coloneqq \sum_{j = 1}^{m} \lambda_j^k F_j(x) \quad \mbox{and} \quad B_{\lambda}^k \coloneqq \sum_{j = 1}^{m}\lambda_j^kB_j^{k}, \quad \forall k\geq 0.
\end{equation}
Next, we show that the sequence of functions $\{F_{\lambda}^k(x)\}_{k\geq0}$ fulfills a Dennis--Mor\'e-type condition.

\begin{theorem}\label{teo:DM}
	Suppose that Assumptions~\ref{assumption2} and \ref{assumption3} hold.
	For each $k\geq 0$, consider $F_{\lambda}^k\colon\R^n\to\R$ and $B_{\lambda}^k$ as in \eqref{alemanha}.Then,
	\begin{equation}\label{DM2}
		\lim_{k \to \infty} \dfrac{\| (B_{\lambda}^k - \HF_{\lambda}^k(\xs))d^k\|}{\|d^k\|}=0
	\end{equation}
	or, equivalently,
	\begin{equation}\label{DM3}
		\lim_{k \to \infty} \dfrac{\| \nabla F_{\lambda}^k(x^k) + \HF_{\lambda}^k(x^k)d^k\|}{\|d^k\|}=0.
	\end{equation}
\end{theorem}
\begin{proof}
	By \eqref{alemanha} and taking into account \eqref{lambdaN}, we have	
	\begin{align*}
		\lim_{k \to \infty} \dfrac{\| (B_{\lambda}^k - \nabla^2 F_{\lambda}^k(\xs))d^k\|}{\|d^k\|} 
		%&= \lim_{k \to \infty} \dfrac{\| \sum_{j=1}^m \lambda_j^k (B_j^k - \HF_j(\xs))d^k\|}{\|d^k\|} \\
		& \leq  \lim_{k \to \infty} \sum_{j=1}^m \lambda_j^k\dfrac{  \|(B_j^k - \HF_j(\xs))d^k\|}{\|d^k\|} \\
		& \leq  \lim_{k \to \infty} \max_{j=1,\ldots,m} \dfrac{  \|(B_j^k - \HF_j(\xs))d^k\|}{\|d^k\|},
	\end{align*}
	which, combined with \eqref{aracaju}, yields \eqref{DM2}. We proceed to show that \eqref{DM2} implies \eqref{DM3}.
	Firstly, considering \eqref{dN}, since $B_{\lambda}^kd^k=-\nabla F_{\lambda}^k(x^k) $, it follows that \eqref{DM3} is equivalent to
	\begin{equation}\label{DM}
		\lim_{k \to \infty} \dfrac{\| (B_{\lambda}^k - \HF_{\lambda}^k(x^k))d^k\|}{\|d^k\|}=0.
	\end{equation}
	Note that
	\begin{align*}\label{reptheca}
			\lim_{k \to \infty} \dfrac{\| (B_{\lambda}^k - \HF_{\lambda}^k(x^k))d^k\|}{\|d^k\|}  \leq &\lim_{k \to \infty}   \dfrac{\| (B_{\lambda}^k - \HF_{\lambda}^k(\xs))d^k\|}{\|d^k\|}+ \lim_{k \to \infty}   \| \HF_{\lambda}^k(\xs) - \HF_{\lambda}^k(x^k)\| 
	\end{align*}
	and, by using continuity arguments,
	\begin{align*}
		\lim_{k \to \infty}  \| \HF_{\lambda}^k(\xs) - \HF_{\lambda}^k(x^k)\| 
		&\leq   \lim_{k \to \infty} \sum_{j=1}^m \lambda_j^k \| \HF_j(\xs)- \HF_j(x^k)\|\\
		&\leq   \lim_{k \to \infty} \max_{j=1,\ldots,m}  \|\HF_j(\xs)- \HF_j(x^k)\|=0.
	\end{align*}
	Therefore, combining the two latter inequalities, we obtain \eqref{DM}.
	The proof that \eqref{DM3} implies \eqref{DM2} can be obtained similarly.
\end{proof}

The following result shows that the unit step size eventually satisfies the Wolfe conditions \eqref{wolfe1}--\eqref{wolfe2}.

\begin{theorem}\label{teo:unit}
Suppose that Assumptions~\ref{assumption2} and \ref{assumption3} hold. Then, the step size $\alpha_k =1$ is admissible for all $k$ sufficiently large.
\end{theorem}
\begin{proof}
	Let $j\in\{1,\ldots,m\}$ be an arbitrary index. It is easy to see that \eqref{aracaju} is equivalent to
	$$ \lim_{k \to\infty}\dfrac{\| (B_j^k - \nabla ^2F_j(x^k))d^k\|}{\|d^k\|} = 0.$$
	Thus, by Taylor’s theorem, it follows that
	\begin{align*}
		F_j(x^k + d^k)  = & F_j(x^k) + \nabla F_j(x^k)^\top d^k + \dfrac{1}{2}(d^k)^\top B_j^kd^k + \dfrac{1}{2}(d^k)^\top \left(\HF_j(x^k) - B_j^k\right) d^k + o(\|d^k\|^2)\\
		= & F_j(x^k) + \nabla F_j(x^k)^\top d^k + \dfrac{1}{2}(d^k)^\top B_j^kd^k + o(\|d^k\|^2),
	\end{align*}
	Therefore, by using \eqref{theta} and setting $t\coloneqq 2\rho<1$, we have
	\[F_j(x^k + d^k) \le F_j(x^k) + t\theta(x^k) +(1-t)\theta(x^k) + o(\|d^k\|^2).\]
	Consequently, according to \eqref{inglaterra}, for sufficiently large $k$,
$$F_j(x^k + d^k) \leq  F_j(x^k) + t\theta(x^k) +\left[-\bar{a}(1-t)+ \dfrac{o(\|d^k\|^2)}{\|d^k\|^2}\right]\|d^k\|^2.$$
As the term in square brackets is negative for $k$ large enough, we conclude that 
	\[
	F_j(x^k + d^k) \leq F_j(x^k) + t\theta(x^k).
	\]
	On the other hand, combining \eqref{dN}--\eqref{thetaN}, we find
	\begin{equation}\label{thetaalt}
	\theta(x^k)=\dfrac{1}{2}\sum_{j=1}^m\lambda_j^k\nabla F_j(x^k)^\top d^k \leq\dfrac{1}{2}\D(x^k,d^k).
	\end{equation}
	Hence, from the last two inequalities and the definition of $t$, we obtain
	\[ F_j(x^k + d^k) \leq F_j(x^k) + \rho\D(x^k,d^k),\]
	for all $k$ sufficiently large.  Given the arbitrary choice of $j\in\{1,\ldots,m\}$, we conclude that the step size $\alpha_k =1$ satisfies \eqref{wolfe1} for all sufficiently large $k$.
	
	Consider the curvature condition \eqref{wolfe2}. From the definition of $F_{\lambda}^k$ in \eqref{alemanha}, we have
	\begin{align*}
		-\sum_{j = 1}^{m}\lambda_j^k\nabla F_j(x^k)^\top d^k 
		& =  \sum_{j = 1}^{m}\lambda_j^k(d^k)^\top\nabla^2F_j(x^k)d^k - \sum_{j = 1}^{m}\lambda_j^k\left[\nabla^2F_j(x^k)d^k +\nabla F_j(x^k)\right]^\top d^k \\
		& = \sum_{j = 1}^{m}\lambda_j^k(d^k)^\top\nabla^2F_j(x^k)d^k - \left[\nabla F_{\lambda}^k(x^k) + \HF_{\lambda}^k(x^k)d^k\right]^\top d^k.
	\end{align*} 	
	Thus, by \eqref{lambdaN}, \eqref{sconvex}, and \eqref{DM3}, we obtain
	\[ -\sum_{j = 1}^{m}\lambda_j^k\nabla F_j(x^k)^\top d^k \geq \underline{L}\|d^k\|^2 + o(\|d^k\|^2) = \|d^k\|^2\left[\underline{L} +  \dfrac{o(\|d^k\|^2)}{\|d^k\|^2}\right]. \]
	Hence,  taking into account \eqref{limd} and \eqref{thetaalt}, for $k$ sufficiently large, it follows that
	\begin{equation}\label{dinamarca}
		-2\theta(x^k)=-\sum_{j = 1}^{m}\lambda_j^k\nabla F_j(x^k)^\top d^k \geq \dfrac{\underline{L}}{2}\|d^k\|^2.
	\end{equation}
	On the other hand, applying the Mean Value Theorem to the scalar function $\nabla F_{\lambda}^k(\cdot)^\top d^k$, there exists $v^k\coloneqq x^k+t_kd^k$ for some $t_k\in(0,1)$ such that
	\[ \nabla F_{\lambda}^k(x^k + d^k)^\top d^k =\nabla F_{\lambda}^k(x^k)^\top d^k + (d^k)^\top \nabla^2F_{\lambda}^k(v^k)d^k.\]
	Therefore, 
	\begin{equation*}\label{ucrania}
		\dfrac{|\nabla F_{\lambda}^k(x^k + d^k)^\top  d^k|}{\|d^k\|^2} \leq \dfrac{\|\nabla F_{\lambda}^k(x^k)+ \nabla^2F_{\lambda}^k(x^k)d^k\|}{\|d^k\|}+\|\nabla^2F_{\lambda}^k(v^k)-\nabla^2F_{\lambda}^k(x^k)\|. 
	\end{equation*}
	Now, by the definitions of $F_{\lambda}^k$ and $v^k$, and considering \eqref{lambdaN} and \eqref{limd}, we obtain
	\begin{align*}
		\lim_{k \to \infty} \|\nabla^2F_{\lambda}^k(v^k)-\nabla^2F_{\lambda}^k(x^k)\| 
		& \leq   \lim_{k \to \infty} \sum_{j=1}^m\lambda_j^k \|\nabla^2F_j(x^k+t_kd^k)-\nabla^2F_j(x^k)\| \\
		& \leq   \lim_{k \to \infty} \max_{j=1,\ldots,m} \|\nabla^2F_j(x^k+t_kd^k)-\nabla^2F_j(x^k)\|=0.
	\end{align*} 
	Thus, combining the latter two inequalities with \eqref{DM3}, we have
	\[\lim_{k \to \infty}  \dfrac{|\nabla F_{\lambda}^k(x^k + d^k)^\top  d^k|}{\|d^k\|^2}=0. \]
	Hence, for $k$ large enough, we have
	\[ |\nabla F_{\lambda}^k(x^k + d^k)^\top  d^k| \leq \sigma\dfrac{\underline{L}}{4}\|d^k\|^2, \]
	which, together with \eqref{dinamarca}, yields
	\[ \sum_{j=1}^m\lambda_j^k \nabla F_j(x^k + d^k)^\top  d^k=\nabla F_{\lambda}^k(x^k + d^k)^\top  d^k \geq - \sigma\dfrac{\underline{L}}{4}\|d^k\|^2 \geq \sigma\theta(x^k).\]
	Therefore, by the definition of $\D(\cdot,\cdot)$, \eqref{lambdaN}, and Lemma~\ref{l:pareto}{\it (ii)}, we obtain
	\[\D(x^k+d^k,d^k)\geq  \sum_{j=1}^m\lambda_j^k \nabla F_j(x^k + d^k)^\top  d^k \geq \sigma \theta(x^k) \geq \sigma \D(x^k,d^k), \]
	for all $k$ sufficiently large, concluding the proof.
\end{proof}

We require an additional auxiliary result.

\begin{lemma}\label{l:DS}
		Suppose that Assumption~\ref{assumption2} holds. Then,
$$\|\nabla F_{\lambda}^k(x^{k+1})-\nabla F_{\lambda}^k(x^k)-\nabla^2F_{\lambda}^k(\xs)(x^{k+1} - x^k)\|   \leq M \|x^{k+1} - x^k\| \varepsilon_k,$$
where $\varepsilon_k:=\max\{\|x^{k+1}-\xs\|^{\nu},\|x^{k}-\xs\|^{\nu}\}$.
\end{lemma}
\begin{proof}
By the definition of $F_{\lambda}^k$ in \eqref{alemanha} and taking into account \eqref{lambdaN}, we obtain
\begin{multline*}
\|\nabla F_{\lambda}^k(x^{k+1})-\nabla F_{\lambda}^k(x^k)-\nabla^2F_{\lambda}^k(\xs)(x^{k+1} - x^k)\| \\
 \leq \max_{j=1,\ldots,m} \|\nabla F_j(x^{k+1}) - \nabla F_j(x^k) - \nabla^2F_j(\xs)(x^{k+1}-x^k)\|.
 \end{multline*}
On the other hand, for each $j\in\{1,\ldots,m\}$, using \eqref{uruguai} and \eqref{lip}, we have
$$\|\nabla F_j(x^{k+1}) - \nabla F_j(x^k) - \nabla^2F_j(\xs)(x^{k+1}-x^k)\|   \leq  \int_{0}^{1}\| \left(\HF_j(x^k + \tau s^k)  - \nabla^2F_j(\xs)\right)s^k\|d\tau$$
$$\leq M \|s^k\| \int_{0}^{1} \| x^k + \tau s^k  - \xs\|^\nu d\tau \leq M \|s^k\|\max\{\|x^{k+1}-\xs\|^{\nu},\|x^{k}-\xs\|^{\nu}\}.$$
By combining the last two inequalities, we obtain the desired result.
\end{proof}

Now, we can establish the superlinear convergence of Algorithm~\ref{alg:MBFGS}.

\begin{theorem}\label{teo:superlinear}
Suppose that Assumptions~\ref{assumption2} and \ref{assumption3} hold. Then,  $\{x^k\}$ converges to $\xs$ superlinearly.
\end{theorem}
\begin{proof}
According to Theorem~\ref{teo:unit}, $d^k=x^{k+1}-x^k$ for all $k$ sufficiently large.  
Consequently, $B_{\lambda}^k (x^{k+1} - x^k) = - \nabla F_{\lambda}^k(x^k)$  (see \eqref{dN}), and hence
$$(B_{\lambda}^k-\nabla^2F_{\lambda}^k(\xs))(x^{k+1} - x^k)= \nabla F_{\lambda}^k(x^{k+1}) -\nabla F_{\lambda}^k(x^k)-\nabla^2F_{\lambda}^k(\xs)(x^{k+1} - x^k)-\nabla F_{\lambda}^k(x^{k+1}),$$
for all $k$ sufficiently large.
Therefore,
\begin{multline*}
\dfrac{\|\nabla F_{\lambda}^k(x^{k+1})\|}{\|x^{k+1}-x^k\|} \leq \dfrac{\|(B_{\lambda}^k-\nabla^2F_{\lambda}^k(\xs))(x^{k+1} - x^k)\|}{\|x^{k+1}-x^k\|} \\
+\dfrac{\|\nabla F_{\lambda}^k(x^{k+1})-\nabla F_{\lambda}^k(x^k)-\nabla^2F_{\lambda}^k(\xs)(x^{k+1} - x^k)\|}{\|x^{k+1}-x^k\|},
 \end{multline*}
	for all $k$ sufficiently large.
Taking limits on both sides of the latter inequality, using \eqref{DM2} and Lemma~\ref{l:DS}, we get
	\begin{equation}\label{grecia}
		\lim_{k \to \infty} \dfrac{\|\nabla F_{\lambda}^k(x^{k+1})\|}{\|x^{k+1}-x^k\|}=0.
	\end{equation}
	On the other hand, considering the definition of $F_{\lambda}^k$ in \eqref{alemanha}, Lemma~\ref{l:vtheta}{\it (iv)}, and Lemma~\ref{peru}{\it (i)}, we find that
	\begin{align*}
		\dfrac{\|\nabla F_{\lambda}^k(x^{k+1})\|}{\|x^{k+1}-x^k\|} & \geq \dfrac{\|\sum_{j = 1}^{m}\lambda_j^k \nabla F_{j}(x^{k+1})\|}{\|x^{k+1} -\xs\| + \|x^k-\xs\|} \geq \dfrac{\|\dsd(x^{k+1})\|}{\|x^{k+1} -\xs\| + \|x^k-\xs\|} \\
		& \geq \dfrac{\underline{L}}{2}\dfrac{\|x^{k+1} -\xs\| }{\|x^{k+1} -\xs\| + \|x^k-\xs\|}= \dfrac{\underline{L}}{2}\dfrac{1}{1+ \frac{\|x^k-\xs\|}{\|x^{k+1} -\xs\|}}.
	\end{align*} 
	Therefore, by using \eqref{grecia}, we conclude that
	\[
	\lim_{k \to \infty}  \dfrac{\|x^{k+1}-\xs\|}{\|x^k -\xs\|}=0,
	\]
which completes the proof.
\end{proof}

\begin{proofof}{Theorem~\ref{teo:convergence}}
The proof follows straightforwardly from Theorems~\ref{teo:unit} and \ref{teo:superlinear}.
\end{proofof}

%\bibliographystyle{abbrv}
%\bibliography{references}

\end{document}